\documentclass[a4paper,10pt]{article}

\usepackage{latexsym}
\usepackage{mathrsfs}
\usepackage{amsfonts,amsmath,amssymb}
\usepackage{indentfirst}
\usepackage{subeqnarray}
\usepackage{verbatim}
\usepackage[pdftex]{graphicx}
\usepackage{epstopdf}
\usepackage{stmaryrd}
\usepackage{multirow}
\usepackage{diagbox}
\usepackage{subcaption}
\usepackage{pdfpages}
\usepackage{algorithm}
\usepackage{algorithmic}
\usepackage[title]{appendix}
\usepackage{bm}

\newcommand{\bx}{\bm{x}}
\newcommand{\bn}{\bm{n}}
\newcommand{\br}{\bm{r}}
\newcommand{\bw}{\bm{w}}
\newcommand{\cB}{\mathcal{B}}
\newcommand{\cD}{\mathcal{D}}
\newcommand{\cI}{\mathcal{I}}
\newcommand{\cU}{\mathcal{U}}
\newcommand{\cL}{\mathcal{L}}
\newcommand{\balpha}{\bm{\alpha}}
\newcommand{\bOmega}{\bm{\Omega}}
\newcommand{\Real}{\mathbb{R}}

\newtheorem{theorem}{Theorem}[section]
\newtheorem{example}{Example}
\newtheorem{remark}[theorem]{Remark}
\numberwithin{equation}{section}

\newenvironment{proof}[1][Proof]{\textbf{#1.} }
{\ \rule{0.75em}{0.75em}\smallskip}

\usepackage[pdftex,unicode,colorlinks,linkcolor=blue,citecolor=red]{hyperref}

\begin{document}

\begin{center}
\Large\bf \textbf{Randomized Neural Networks for Integro-Differential Equations\\ with Application to Neutron Transport}
\end{center}

\begin{center}
    {\large Haoning Dang}\footnote{School of Mathematics and Statistics, Xi'an Jiaotong University, Xi'an, Shaanxi 710049, China. Email: {\tt haoningdang.xjtu@stu.xjtu.edu.cn}},\quad
    {\large Fei Wang}\footnote{School of Mathematics and Statistics \& State Key Laboratory of Multiphase Flow in Power Engineering, Xi'an Jiaotong University, Xi'an, Shaanxi 710049, China. The work of this author was partially supported by the National Natural Science Foundation of China (Grant No.\ 92470115). Email: {\tt feiwang.xjtu@xjtu.edu.cn}}, \quad
    {\large Yifan Chen}\footnote{School of Energy and Power Engineering, Xi'an Jiaotong University, Xi'an, Shaanxi 710049, China. Email: {\tt 2203221110@stu.xjtu.edu.cn}},\quad  
    {\large Zhouyu Liu}\footnote{School of Energy and Power Engineering, Xi'an Jiaotong University, Xi'an, Shaanxi 710049, China. The work of this author was partially supported by the National Natural Science Foundation of China (Grant No.\ 12375174) Email: {\tt zhouyuliu@xjtu.edu.cn}},\quad  
    {\large Dong Liu}\footnote{Nuclear Power Institute of China, State Key Laboratory of Advanced Nuclear Energy Technology, Chengdu, Sichuan 610213, China. The work of this author was partially supported by the National Natural Science Foundation of China (Grant No.\ 12575189). Email: {\tt liudong@npic.ac.cn}},\quad 
    {\large Hongchun Wu}\footnote{School of Energy and Power Engineering, Xi'an Jiaotong University, Xi'an, Shaanxi 710049, China. The work of this author was partially supported by the National Natural Science Foundation of China (Grant No.\ 12375175), and the Innovative Scientific Program of CNNC. Email: {\tt hongchun@xjtu.edu.cn}}
\end{center}

\medskip
\begin{quote}
{\bf Abstract. } Integro-differential equations arise in a wide range of applications, including transport, kinetic theory, radiative transfer, and multiphysics modeling, where nonlocal integral operators couple the solution across phase space. Such nonlocality often introduces dense coupling blocks in deterministic discretizations, leading to increased computational cost and memory usage, while physics-informed neural networks may suffer from expensive nonconvex training and sensitivity to hyperparameter choices. In this work, we present randomized neural networks (RaNNs) as a mesh-free collocation framework for linear integro-differential equations. Because the RaNN approximation is intrinsically dense through globally supported random features, the nonlocal integral operator does not introduce an additional loss of sparsity, while the approximate solution can still be represented with relatively few trainable degrees of freedom. By randomly fixing the hidden-layer parameters and solving only for the linear output weights, the training procedure reduces to a convex least-squares problem in the output coefficients, enabling stable and efficient optimization. As a representative application, we apply the proposed framework to the steady neutron transport equation, a high-dimensional linear integro-differential model featuring scattering integrals and diverse boundary conditions. Extensive numerical experiments demonstrate that, in the reported test settings, the RaNN approach achieves competitive accuracy while incurring substantially lower training cost than the selected neural and deterministic baselines, highlighting RaNNs as a robust and efficient alternative for the numerical simulation of nonlocal linear operators.

\end{quote}

{\bf Keywords.} Randomized neural networks, integro-differential equation, nonlocal operator, neutron transport, least-squares problem

\medskip
{\bf Mathematics Subject Classification.} 35Q20, 65D15, 68T07

\section{Introduction}

Integro-differential equations play a central role in modeling nonlocal interactions in many areas of science and engineering. Representative examples include kinetic transport and radiative transfer (\cite{DuderstadtHamilton1976NRA,Cercignani1988Boltzmann}), where a scattering or collision operator couples the solution across directions, energies, or other phase-space variables. From a computational standpoint, the principal difficulty is that the integral term is intrinsically nonlocal. Even when the differential operator admits sparse discretizations, the discretized integral operator typically introduces dense coupling in auxiliary variables (e.g., angles or groups), which can substantially increase computational and memory costs. Moreover, complex boundary conditions and multi-region material configurations further amplify these challenges.

Neural-network-based solvers offer an alternative route via mesh-free approximation. Physics-informed neural networks (PINNs, \cite{Raissi2019Physics}) embed governing equations and boundary conditions in a loss function and approximate solutions by training network parameters through nonlinear optimization. However, for integro-differential equations with strong nonlocal couplings, standard PINN training leads to a nonconvex optimization problem whose solution process may become trapped in local minima, often requiring extensive hyperparameter tuning and large computational budgets to reach satisfactory accuracy (\cite{Wang2021Gradient}). For applications of PINNs to integro-differential equations, one can refer to works such as \cite{Yuan2022APINN,Huhn2023FFPINNRTE}. Furthermore, many other neural-network-based approaches have been proposed for solving differential equations (\cite{E2018DRM,Sirignano2018DGM,Xu2020FiniteNeuron,Chen2023Friedrichs}), among the representative references listed here, direct applications to integro-differential equations appear to be relatively limited; see, for example, \cite{AlAradi2022DGM}.

In this work, we present Randomized Neural Networks (RaNNs, \cite{Huang2006ELM,Shang2023Randomized}) as a general methodology for solving linear integro-differential equations. The key idea is to randomize and fix the nonlinear parameters (hidden-layer weights and biases) and train only the linear output layer. This converts the learning problem into a linear least-squares problem, thereby (i) yielding a global minimizer of the empirical least-squares objective in the output coefficients, (ii) significantly reducing training cost relative to gradient-based training of fully trainable networks, and (iii) aligning naturally with linear operators in integro-differential models. Differential operators can be computed exactly by analytical differentiation of the fixed basis functions, while integral operators can be incorporated by standard numerical quadrature rules at collocation points, leading to a consistent and straightforward construction of the least-squares matrix. For heterogeneous or multi-material domains, the framework extends to local randomized neural networks (LRaNNs, \cite{Sun2022LRNNDG}) coupled through interface conditions (\cite{Li2023LRNN}). Because the RaNN discretization is intrinsically dense through globally supported random features, nonlocal integral terms do not cause an additional loss of sparsity as they do in many classical discretizations; meanwhile, competitive accuracy may still be achieved with relatively few trainable degrees of freedom.

To concretely illustrate the methodology, we apply RaNNs to the steady neutron transport equation (NTE), a high-dimensional linear integro-differential equation featuring a nonlocal scattering operator and practical boundary conditions. In this paper, the NTE serves as a case study to showcase how RaNNs handle nonlocal integral terms efficiently and how local networks can be coupled across interfaces in multi-material settings. In reactor physics applications, most existing neural approaches focus on diffusion models (\cite{Xie2021DLNeutronDiffusion,Wang2022cPINNNeutronDiffusion}) that avoid the integral scattering operator, while only a smaller number of studies directly target transport equations with integral terms (\cite{Liu2023NTE,Xie2024BDPINN,Liu2025IDNT}).

Classical deterministic solvers address such problems by combining suitable discretizations for the differential and integral parts, such as finite difference/element in space and discrete ordinates (\cite{CarlsonLathrop1965SN}) or spherical harmonics in direction. While effective, these approaches often suffer from a loss of sparsity in the coupling blocks due to the integral term. Although not fully dense, these blocks become significantly denser, leading to expensive linear algebra when high angular resolution or many groups are required. Stochastic approaches such as Monte Carlo (MC) provide flexibility for complicated geometries and physics but are limited by slow convergence when high accuracy is demanded (\cite{LuxKoblinger1991MC}). For background on neutron transport and representative methods, see \cite{LewisMiller1984CMNT,Kuridan2023NTE}.

The remainder of this paper is organized as follows. Section \ref{sec:RaNN} develops the RaNN framework for linear integro-differential equations, including least-squares formulation, numerical quadrature for integral terms, local coupling, and sketching acceleration. Section \ref{sec:NTE} specializes the framework to the neutron transport equation and its multigroup form as a demonstration model. Section \ref{sec:experiments} reports numerical experiments and Section \ref{sec:summary} concludes with a summary and outlook.

\section{Randomized Neural Networks for Integro-Differential Equations} \label{sec:RaNN}

Randomized Neural Networks (RaNNs) refer to neural network methods that incorporate randomness in a subset of parameters. The central mechanism is to randomize and fix certain nonlinear parameters (weights and biases in hidden neurons) and only train the linear output layer weights, thereby reducing computational cost and improving training efficiency. In this work, we focus on RaNNs with a single hidden layer; extensions to multilayer architectures can be found in \cite{Dang2024AGRNN}.

\begin{figure}[!htbp]
    \centering
    \includegraphics[width=0.35\textwidth]{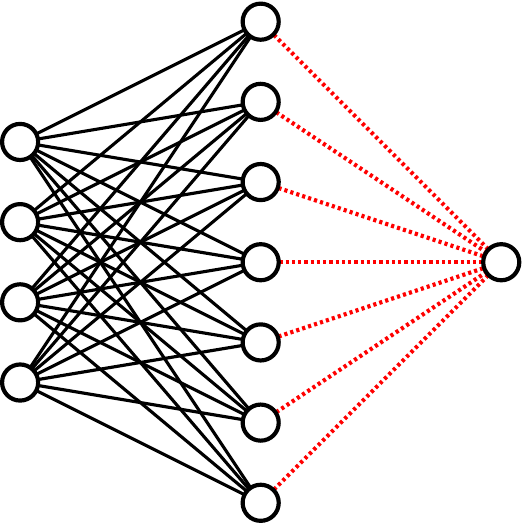}
    \caption{A schematic diagram of a randomized neural network (RaNN).}
    \label{fig:RaNN}
\end{figure}

As shown in Figure \ref{fig:RaNN}, the black solid lines represent randomly selected and fixed weights, while the red dashed lines denote trainable parameters. The fixed parameters are generated from a uniform distribution $\cU(-r,r)$ (see \cite{Dang2024AGRNN} for guidance on choosing $r$). Because the nonlinear parameters remain fixed, the network becomes a linear combination of predetermined nonlinear basis functions, and training reduces to finding the optimal linear coefficients.

\subsection{RaNN approximation as a linear basis expansion}
A single hidden layer RaNN takes the form
\begin{align} \label{Eq:basis_rewrite}
    \Psi_\rho(\bx)=\sum_{j=1}^m \alpha_j\psi_j(\bx)=\sum_{j=1}^m \alpha_j\rho(\bw_j\cdot\bx+b_j),
\end{align}
where $m$ is the number of neurons, $\rho$ is an activation function, $\bx\in\Real^d$ is the input in $d$ dimensions, $\bw_j\in\Real^d$ and $b_j\in\Real$ are fixed random parameters, and $\balpha=(\alpha_1,\cdots,\alpha_m)^\top$ are trainable linear coefficients. This viewpoint is particularly suitable for linear operators: once $\bw_j,b_j$ are fixed, applying a linear operator to $\Psi_\rho$ reduces to applying the operator to each basis function $\psi_j(\bx)$ and linearly combining the results.

\subsection{Least-squares formulation for linear differential and integral operators} \label{subsec:ls}

We consider a general linear integro-differential equation on a domain $D$,
\begin{align*}
    \cD\Psi(\bx)+\cI\Psi(\bx)=Q(\bx)\quad \text{in } D,
\end{align*}
supplemented with boundary constraints, denoted abstractly by
\begin{align*}
    \cB\Psi(\bx)=0\quad \text{on } \Gamma.
\end{align*}
Here $\cD$ is a linear differential operator and $\cI$ is a linear integral (nonlocal) operator. Using an $L^2$-type residual minimization, we define
\begin{align*}
    \cL(\Psi_{\rho})=\left\|\cD\Psi_{\rho}+\cI\Psi_{\rho}-Q\right\|_{0,D}^2+\left\|\omega^{1/2}\cB\Psi_{\rho}\right\|_{0,\Gamma}^2.
\end{align*}
Here, $\|f\|_{0,D}^2 := \int_D |f|^2\,\mathrm{d}\bx$ and
\begin{align*}
\left\|\omega^{1/2}g\right\|_{0,\Gamma}^2:=\int_\Gamma \omega(\bx)\,|g(\bx)|^2\,\mathrm{d}s
\end{align*}
denote the squared $L^2$ norms.

Let $\{\bx_i^I\}_{i=1}^{N^I} \subset D$ and $\{\bx_i^B\}_{i=1}^{N^B} \subset \Gamma$ be interior and boundary collocation points, respectively. Approximating the $L^2$ norms by collocation averages yields the empirical loss
\begin{align} \label{Eq:empirical-loss_rewrite}
    \widetilde{\cL}(\Psi_{\rho})&=\frac{|D|}{N^I}\sum_{i=1}^{N^I}\left(\cD\Psi_{\rho}(\bx_i^I)+\cI\Psi_{\rho}(\bx_i^I)-Q(\bx_i^I)\right)^2+\frac{|\Gamma|}{N^B}\sum_{i=1}^{N^B}\omega(\bx_i^B)\left(\cB\Psi_{\rho}(\bx_i^B)\right)^2 .
\end{align}

Substituting the RaNN ansatz \eqref{Eq:basis_rewrite} into \eqref{Eq:empirical-loss_rewrite} yields a quadratic function of $\balpha$:
\begin{align*}
    \bar{\cL}(\balpha)&=\frac{|D|}{N^I}\sum_{i=1}^{N^I}\left(\sum_{j=1}^m \alpha_j\cD\psi_j(\bx_i^I)+\sum_{j=1}^m \alpha_j\cI\psi_j(\bx_i^I)-Q(\bx_i^I)\right)^2+\frac{|\Gamma|}{N^B}\sum_{i=1}^{N^B}\omega(\bx_i^B)\left(\sum_{j=1}^m \alpha_j\cB\psi_j(\bx_i^B)\right)^2 .
\end{align*}
Equivalently, in matrix form,
\begin{align*}
    \bar{\cL}(\balpha)=\eta_1^2(A\balpha-F)^\top(A\balpha-F)+\eta_2^2(B_\omega\balpha)^\top(B_\omega\balpha),
\end{align*}
where
\begin{align*}
    A_{ij}=\cD\psi_j(\bx_i^I)+\cI\psi_j(\bx_i^I),\quad
    F_i=Q(\bx_i^I),\quad
    (B_\omega)_{ij}=\omega(\bx_i^B)^{1/2}\cB\psi_j(\bx_i^B),\quad
    \eta_1=\sqrt{\frac{|D|}{N^I}},\quad
    \eta_2=\sqrt{\frac{|\Gamma|}{N^B}}.
\end{align*}
The minimization of $\bar{\cL}(\balpha)$ is equivalent to the weighted least-squares problem
\begin{align} \label{Eq:ls-prob_rewrite}
\min_{\balpha}\left\|\left[\begin{matrix}\eta_1A\\\eta_2 B_\omega\end{matrix}\right]\balpha-\left[\begin{matrix}\eta_1F\\\bm{0}\end{matrix}\right]\right\|_2^2 .
\end{align}

\paragraph{Why this is advantageous for integro-differential equations.}
In many deterministic discretizations, the integral operator induces strong coupling among auxiliary degrees of freedom (e.g., angular directions or energy groups). As the resolution is increased, this coupling typically enlarges the effective system size and can lead to substantial computational and memory costs. In the RaNN formulation, the approximate solution is represented in a randomized neural function space with $m$ basis functions, and the unknowns are only the output coefficients $\balpha\in\mathbb{R}^m$. Because these basis functions are globally supported, the resulting least-squares system \eqref{Eq:ls-prob_rewrite} is intrinsically dense. Consequently, the nonlocal integral operator does not introduce an additional loss of sparsity or a qualitative change in the algebraic structure of the discretization; its main effect is on the operator-evaluation stage during matrix assembly. At the same time, competitive accuracy can often be achieved with a relatively small number of trainable degrees of freedom. The main advantage of the RaNN formulation is therefore twofold: robustness with respect to dense nonlocal couplings, and efficient solution representation with comparatively few unknown coefficients. Moreover, for fixed random features, the training objective is a convex quadratic function of $\balpha$, and the coefficients can be computed efficiently by standard least-squares solvers without the local-minimum issues associated with gradient-based training of fully trainable networks.

\begin{remark}
    For nonlinear problems, the governing equations can be linearized using iterative methods such as Picard iteration or Newton iteration.
\end{remark}

\subsection{Operator evaluation: exact differentiation and quadrature for integral terms}
A key practical step is the evaluation of $\cD\psi_j$ and $\cI\psi_j$ for each basis function. The differential component $\cD\psi_j$ can be computed exactly via analytical differentiation. The integral component $\cI\psi_j$ is generally unavailable in closed form and is therefore approximated by numerical quadrature.

Let $\{\xi_k\}_{k=1}^K$ be quadrature points for the integral variable(s) and $\{\beta_k\}_{k=1}^K$ the corresponding weights. Then, we approximate
\begin{align*}
    \cI\psi_j(\bx)=\int \mathcal{K}(\bx,\xi)\psi_j(\xi)\,\mathrm{d}\xi\approx \sum_{k=1}^K \beta_k\, \mathcal{K}(\bx,\xi_k)\,\psi_j(\xi_k).
\end{align*}
In the neutron-transport experiments, the scattering integral is evaluated by quadrature over the angular variables at each spatial location. In the numerical experiments reported below, the integral terms are approximated using the trapezoidal rule.

\subsection{Local RaNNs and interface coupling}
Many applications involve heterogeneous media or multiple subdomains. Inspired by local randomized neural network ideas for interface problems (\cite{Li2023LRNN}), we extend the present formulation to multi-material transport settings by coupling subdomain networks through weighted interface residuals. Consider two subdomains $D_1$ and $D_2$ with interface $\Gamma_I=\bar{D}_1\cap\bar{D}_2$, and define $\Psi_\rho^1$ and $\Psi_\rho^2$ on $D_1$ and $D_2$, respectively. Accordingly,
\begin{align*}
    \Psi_\rho(\bx)=\begin{cases}
        \Psi_\rho^1(\bx),\quad\bx\in D_1,\\\Psi_\rho^2(\bx),\quad\bx\in D_2,
    \end{cases}
\end{align*}
where $\Psi_\rho^i(\bx)$ has $m_i$ neurons and the corresponding uniform distribution parameter is $r_i$ ($i=1,2$). To impose continuity of the angular flux across the interface, we sample $\{\bx_i^F\}_{i=1}^{N^F}\subset \Gamma_I$ and add the weighted interface penalty term. Then, the empirical loss \eqref{Eq:empirical-loss_rewrite} can be written as 
\begin{align*}
    \widetilde{\cL}(\Psi_{\rho})=\frac{|D|}{N^I}\sum_{i=1}^{N^I}\left(\cD\Psi_{\rho}(\bx_i^I)+\cI\Psi_{\rho}(\bx_i^I)-Q(\bx_i^I)\right)^2+\frac{|\Gamma|}{N^B}\sum_{i=1}^{N^B}\omega(\bx_i^B)\left(\cB\Psi_{\rho}(\bx_i^B)\right)^2+\frac{|\Gamma_I|}{N^F}\sum_{i=1}^{N^F}\omega_I(\bx_i^F)\big(\Psi_\rho^1(\bx_i^F)-\Psi_\rho^2(\bx_i^F)\big)^2 .
\end{align*}
This interface term adds residual rows to the augmented weighted least-squares system, coupling the output coefficients across subdomains in a straightforward manner.

\subsection{Comparison with classical deterministic discretizations}

For many integro-differential equations, the integral term serves as a nonlocal source involving integrals of the solution over auxiliary variables. While the differential operator may yield a sparse discretization, the integral operator generally induces dense couplings among local degrees of freedom (e.g., angular or group variables in transport problems), leading to dense blocks in the system matrix and increased computational cost.

For the RaNN approach, the primary computational task is the construction of the matrices $A$, $B$ and vector $F$ in \eqref{Eq:ls-prob_rewrite}. Since derivatives are computed exactly and integrals are computed via quadrature, the assembly consists primarily of evaluating basis functions and performing matrix multiplications. The least-squares matrix is dense due to global basis functions; thus, the nonlocal coupling does not cause an additional qualitative degradation of sparsity, and the method remains practically viable for the studied problems, although the assembly and least-squares costs still grow with the collocation size, quadrature complexity, and multigroup coupling.

\subsection{Universal approximation and training stability}
In many integro-differential models, the solution is a continuous function on a bounded domain (or a set of such functions in multigroup settings). RaNNs possess universal approximation properties in Sobolev norms (\cite{Dang2024AGRNN}), consistent with trainable neural networks. 

In terms of training, conventional training requires solving a nonlinear and nonconvex optimization problem, often with many local minima and saddle points, which may fail to deliver satisfactory accuracy without extensive tuning. In contrast, RaNN training solves a convex quadratic problem in $\balpha$, thereby guaranteeing a global minimizer of the empirical loss and improving training stability. If the resulting matrix is rank deficient, we can further add Tikhonov regularization to ensure the uniqueness of the solution.

\subsection{Sketching acceleration for large least-squares systems} \label{sec:sketching}

Sketching is a randomized technique for accelerating the solution of large least-squares problems. We briefly describe its use in the present setting; theoretical analysis can be found in \cite{Woolfe2008random, Epperly2024random,Shan2025LS}. Denote the augmented least-squares problem \eqref{Eq:ls-prob_rewrite} in the compact form
\begin{align*}
    \min_{\balpha}\|\tilde{A}\balpha-\tilde{F}\|_2^2,
\end{align*}
where $\tilde{A}\in\mathbb{R}^{(N^I+N^B)\times m}$ and $\tilde{F}\in\mathbb{R}^{(N^I+N^B)}$. 
Sketching introduces a random embedding matrix $S\in\mathbb{R}^{N^S\times (N^I+N^B)}$ to form the compressed problem
\begin{align*}
    \min_{\balpha}\|S\tilde{A}\balpha-S\tilde{F}\|_2^2,
\end{align*}
where $N^S\ll N^I+N^B$. 
In this work, we take $N^S = m d_S$ and construct $S$ as a sparse matrix with exactly $n_S$ nonzero entries per row, whose values are independently chosen as $\pm\sqrt{1/n_S}$.

\begin{remark}
The sketching matrix $S$ is constructed by randomly selecting $n_S$ rows of $\tilde{A}$ for each sketched row and forming their linear combinations with coefficients $\pm\sqrt{1/n_S}$. As a result, the sketched system contains $m d_S$ rows and exhibits a highly sparse structure, leading to low computational and memory costs when forming $S\tilde{A}$ and $S\tilde{F}$. Throughout all numerical experiments, we take $d_S = 2$ and $n_S = 8$.
\end{remark}

\section{Steady Neutron Transport Equation} \label{sec:NTE}
To demonstrate the above RaNN framework on a representative high-dimensional nonlocal model, we consider the steady neutron transport equation (NTE), a linear integro-differential equation that describes neutron angular flux in reactor physics.

\subsection{Steady transport equation and boundary conditions}

Consider the steady-state neutron transport equation
\begin{align} \label{Eq:NTE}
    \bOmega\cdot\nabla_{\br}\Psi(\br,\bOmega,E)+\Sigma_t(\br,\bOmega,E)\Psi(\br,\bOmega,E)=\int_0^\infty\int_{4\pi}\Sigma_s(\br;\bOmega'\to\bOmega,E'\to E)\Psi(\br,\bOmega',E')\,\mathrm{d}\bOmega'\mathrm{d}E'+Q(\br,\bOmega,E)\text{ in }D,
\end{align}
where $\Psi$ denotes the unknown neutron angular flux. The phase-space domain is $D = D_{\br} \times D_{\bOmega} \times D_E$, 
with $\br$ the spatial variable, $\bOmega$ the transport direction, and $E$ the neutron energy. The functions $\Sigma_t$ and $\Sigma_s$ are the macroscopic total and scattering cross sections, respectively, and $Q$ is a prescribed source term. In Cartesian coordinates, $\br = (x,y,z)$ and the direction vector $\bOmega$ is parameterized as
\begin{align*}
    \bOmega=(\sqrt{1-\mu^2}\cos\varphi,\; \sqrt{1-\mu^2}\sin\varphi,\; \mu),
\end{align*}
with $\mu\in[-1,1]$ and $\varphi\in[0,2\pi)$. Accordingly, integration over the unit sphere is given by
\begin{align*}
    \int_{4\pi}\mathrm{d}\bOmega=\int_{-1}^{1}\!\int_{0}^{2\pi}\mathrm{d}\varphi\,\mathrm{d}\mu .
\end{align*}

Define the inflow boundary $\Gamma^-=\{(\br,\bOmega,E): \br\in\partial D_{\br},\, \bn_{\br}\cdot\bOmega<0\}$, where $\bn_{\br}$ is the unit outward normal vector. Two typical boundary conditions are:
\begin{itemize}
\item Vacuum boundary condition,
\begin{align} \label{Eq:bdcond1}
    \Psi(\br,\bOmega,E)=0 \text{ on }\Gamma^-.
\end{align}
\item Reflecting boundary condition,
\begin{align} \label{Eq:bdcond2}
    \Psi(\br,\bOmega,E)=\Psi(\br,\bOmega_r,E)\text{ on }\Gamma^-,
\end{align}
where $\bOmega_r$ is the reflecting angle satisfying $\bn_{\br}\cdot\bOmega_r=-\bn_{\br}\cdot\bOmega$ and $(\bOmega\times\bOmega_r)\cdot\bn_{\br}=0$.
\end{itemize}

\subsection{Multigroup approximation}
In neutron transport problems, the neutron energy can vary continuously from tens of MeV down to 0 eV. The multigroup method is the most commonly used and straightforward discretization approach. In this method, the energy range of the neutron flux is divided into $G$ intervals $(E_G,E_{G-1}),\cdots,(E_g,E_{g-1}),\cdots, (E_1,E_0)$. Each such energy interval $\Delta E_g=E_{g-1}-E_g$ is referred to as an energy group. The group index $g$ is typically numbered in descending order from high to low energy. Integrating the neutron transport equation \eqref{Eq:NTE} over each energy interval $\Delta E_g$ yields the following system of equations:
\begin{align} \label{Eq:multigroup}
    \bOmega\cdot\nabla\Psi_g(\br,\bOmega)+\Sigma_{t,g}(\br,\bOmega)\Psi_g(\br,\bOmega)=\sum_{g'=1}^{G}\int_{4\pi} \Sigma_{s,g'\to g}(\br;\bOmega'\to\bOmega)\Psi_{g'}(\br,\bOmega')\,\mathrm{d}\bOmega'+Q_g(\br,\bOmega),
    \quad g=1,\cdots,G,
\end{align}
where
\begin{align*}
    \Psi_g(\br,\bOmega)=\int_{E_g}^{E_{g-1}} \Psi(\br,\bOmega,E)\,\mathrm{d}E
\end{align*}
is the group angular flux, defined as the integral of the angular flux over energy group $g$, and
\begin{align*}
    Q_g(\br,\bOmega)=\int_{E_g}^{E_{g-1}}Q(\br,\bOmega,E)\,\mathrm{d}E.
\end{align*}
Here, we are not concerned with how the group constants $\Sigma_{t,g}(\br,\bOmega)$ and $\Sigma_{s,g'\to g}(\br;\bOmega'\to\bOmega)$ are calculated, but rather treat them as known functions. Note that we will omit the subscript $g$ in the one-group case. 

In practice, we are primarily interested in the neutron scalar flux:
\begin{align} \label{Eq:flux}
    \Phi_g(\br)=\int_{4\pi}\Psi_g(\br,\bOmega)\,\mathrm{d}\bOmega.
\end{align}
Accordingly, all numerical results reported below are compared in terms of the neutron scalar flux.

\subsection{Specialization of the RaNN least-squares system to the NTE}
For the one-group form of \eqref{Eq:multigroup}, we write
\begin{align*}
    \cD\Psi(\br,\bOmega)+\cI\Psi(\br,\bOmega)=Q,
\end{align*}
with
\begin{align*}
    \cD\Psi(\br,\bOmega)=\bOmega\cdot\nabla\Psi(\br,\bOmega)+\Sigma_t(\br,\bOmega)\Psi(\br,\bOmega),
    \qquad
    \cI\Psi(\br,\bOmega)=-\int_{4\pi} \Sigma_s(\br;\bOmega'\to\bOmega)\Psi(\br,\bOmega')\,\mathrm{d}\bOmega',
\end{align*}
and denote the boundary condition \eqref{Eq:bdcond1} or \eqref{Eq:bdcond2} by $\cB\Psi(\br,\bOmega)=0$. Writing $(\br,\bOmega)$ as $\bx$, the empirical loss \eqref{Eq:empirical-loss_rewrite} and least-squares system \eqref{Eq:ls-prob_rewrite} apply directly.

For the integral operator, let $\{\bOmega_k\}_{k=1}^K$ and $\{\beta_k\}_{k=1}^K$ be quadrature points and weights. Then
\begin{align*}
    \cI\psi_j(\br,\bOmega)
    =-\int_{4\pi} \Sigma_s(\br;\bOmega'\to\bOmega)\psi_j(\br,\bOmega')\,\mathrm{d}\bOmega'
    \approx-\sum_{k=1}^K\beta_k\Sigma_s(\br;\bOmega_k\to\bOmega)\psi_j(\br,\bOmega_k).
\end{align*}
In this work, the quadrature points are taken to be the collocation points, and the trapezoidal rule is used throughout.

\subsection{Weighted loss for neutron transport problems}
\label{subsec:weighted-loss-nte}

For neutron transport equations, the boundary and interface residuals should be weighted by the transport trace factor determined by the normal component of the angular direction. More precisely, for a vacuum boundary point $(\br,\bOmega)\in \partial D_{\br}\times D_{\bOmega}$ with outward unit normal $\bn_{\br}$, we take
\begin{align*}
    \omega(\br,\bOmega)=|\bn_{\br}\cdot\bOmega|.
\end{align*}
For an interior interface point $(\br,\bOmega)\in \Gamma_I$ with unit normal $\bn_I$, we analogously define
\begin{align*}
    \omega_I(\br,\bOmega)=|\bn_I\cdot\bOmega|.
\end{align*}
These weights are consistent with the natural trace norms for first-order transport operators and measure the strength of the flux crossing the boundary or interface.

Accordingly, for the vacuum boundary condition, we take
\begin{align*}
\cB\Psi(\br,\bOmega)=\Psi(\br,\bOmega).
\end{align*}
For the reflecting boundary condition, we take
\begin{align*}
\cB\Psi_\rho(\br,\bOmega)=\Psi_\rho(\br,\bOmega)-\Psi_\rho(\br,\bOmega_r).
\end{align*}
The weighted least-squares loss for the one-group neutron transport equation is then written as
\begin{align*}
    \cL(\Psi_\rho)=\|\cD\Psi_\rho+\cI\Psi_\rho-Q\|_{0,D}^2+\|\omega^{1/2}\cB\Psi_\rho\|_{0,\Gamma^-}^2,
\end{align*}
where
\begin{align*}
    \left\|\omega^{1/2}\cB\Psi_\rho\right\|_{0,\Gamma^-}^2:=\int_{\Gamma^-}|\bn_{\br}\cdot\bOmega||\cB\Psi_\rho(\br,\bOmega)|^2\,\mathrm{d}\sigma(\br)\,\mathrm{d}\bOmega.
\end{align*}

If local RaNNs are used on two subdomains $D_1$ and $D_2$, then the interface continuity can be imposed through the weighted penalty
\begin{align*}
    \left\|\omega_I^{1/2}(\Psi_\rho^1-\Psi_\rho^2)\right\|_{0,\Gamma_I}^2:=\int_{\Gamma_I}|\bn_I\cdot\bOmega||\Psi_\rho^1(\br,\bOmega)-\Psi_\rho^2(\br,\bOmega)|^2\,\mathrm{d}\sigma(\br)\,\mathrm{d}\bOmega .
\end{align*}
Hence, the corresponding weighted loss becomes
\begin{align*}
    \cL(\Psi_\rho)=\|\cD\Psi_\rho+\cI\Psi_\rho-Q\|_{0,D}^2+\|\omega^{1/2}\cB\Psi_\rho\|_{0,\Gamma^-}^2+\left\|\omega_I^{1/2}(\Psi_\rho^1-\Psi_\rho^2)\right\|_{0,\Gamma_I}^2.
\end{align*}

For the angular parametrization
\begin{align*}
    \bOmega=\big(\sqrt{1-\mu^2}\cos\varphi,\ \sqrt{1-\mu^2}\sin\varphi,\ \mu\big),
\end{align*}
the weight can be written explicitly once the normal direction is fixed. For example, in the one-dimensional slab geometry, $\omega=|\mu|$, for a vertical interface or boundary with normal $\bn=(\pm1,0,0)$, one has
\begin{align*}
    |\bn\cdot\bOmega|=\sqrt{1-\mu^2}\,|\cos\varphi|.
\end{align*}
For a horizontal boundary with normal $\bn=(0,\pm1,0)$, one has
\begin{align*}
    |\bn\cdot\bOmega|=\sqrt{1-\mu^2}\,|\sin\varphi|.
\end{align*}
Therefore, in each neutron transport example below, the weight is determined by the corresponding transport trace factor $|\bn\cdot\bOmega|$ on the physical boundary or interface.

\subsection{Relation to the transport graph norm and inflow-boundary norm}
\label{subsec:graphnorm}

To clarify the analytical meaning of the least-squares residual for transport problems, we briefly relate it to the natural graph norm of the streaming--absorption operator and the associated weighted inflow-boundary norm. For first-order transport equations on bounded domains, weighted trace spaces on the inflow and outflow boundaries, together with the corresponding Green identity, are standard (\cite{Dahmen2012APG,Tervo2021Regularity}). Related boundary functionals have also been used in least-squares formulations of neutron transport to establish stability and error estimates (\cite{Manteuffel2000Boundary}).

For the one-group transport problem, let
\begin{align*}
D := D_{\br}\times D_{\bOmega},
\qquad
\Gamma^\pm := \{(\br,\bOmega)\in \partial D_{\br}\times D_{\bOmega}:\ \pm\,\bn_{\br}\cdot\bOmega>0\}.
\end{align*}
We define the weighted outflow/inflow boundary norms by
\begin{align*}
    \|\omega^{1/2}g\|_{0,\Gamma^\pm}^2:=\int_{\Gamma^\pm}|\bn_{\br}\cdot\bOmega||g(\br,\bOmega)|^2\,\,\mathrm{d}\sigma(\br)\,\mathrm{d}\bOmega,
\end{align*}
and the transport graph norm by
\begin{align*}
    \|\Psi\|_{\mathrm{gr}}^2:=\|\Psi\|_{0,D}^2+\|\bOmega\cdot\nabla_{\br}\Psi\|_{0,D}^2.
\end{align*}
We also introduce the transport graph space
\begin{align*}
    W:=\left\{\Psi\in L^2(D):\bOmega\cdot\nabla_{\br}\Psi\in L^2(D)\right\}.
\end{align*}
Under the geometric assumptions stated below, standard transport trace theory implies that both traces
\begin{align*}
    \gamma^\pm : W \to L^2(\Gamma^\pm, |\bn_{\br}\cdot\bOmega|\,\mathrm{d}\sigma\,\mathrm{d}\bOmega)
\end{align*}
are well defined and continuous (\cite{Dahmen2012APG,Tervo2021Regularity}).

We now consider the streaming--absorption operator
\begin{align*}
    \cD \Psi := \bOmega\cdot\nabla_{\br}\Psi + \Sigma_t(\br,\bOmega)\,\Psi.
\end{align*}
Assume throughout this subsection that
\begin{align}\label{Eq:sigma-bounds}
    0<\sigma_* \le \Sigma_t(\br,\bOmega)\le \sigma^*<\infty\qquad \text{for a.e. }(\br,\bOmega)\in D.
\end{align}

\begin{theorem}[Graph-norm control by the transport residual]\label{thm:graphnorm}
Assume that $D_{\br}\subset\Real^d$ is a bounded convex domain with Lipschitz boundary, and that \eqref{Eq:sigma-bounds} holds. Define
\begin{align*}
    W_0^- := \{\Psi\in W:\ \gamma^-\Psi=0\}.
\end{align*}
Then, for every $\Psi\in W_0^-$, the following estimate holds:
\begin{align}\label{Eq:graph-control-by-D}
    \|\Psi\|_{\mathrm{gr}}\le C_{\mathrm{gr}}\,\|\cD \Psi\|_{0,D},\qquad C_{\mathrm{gr}}:=\left[\sigma_*^{-2}+\left(1+\frac{\sigma^*}{\sigma_*}\right)^2\right]^{1/2}.
\end{align}
Conversely,
\begin{align}\label{Eq:D-control-by-graph}
    \|\cD \Psi\|_{0,D}\le(1+\sigma^*)\,\|\Psi\|_{\mathrm{gr}}.
\end{align}
Hence, on $W_0^-$, the residual norm $\|\cD \Psi\|_{0,D}$ and the transport graph norm $\|\Psi\|_{\mathrm{gr}}$ are equivalent.
\end{theorem}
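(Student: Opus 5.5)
The argument rests on one energy estimate: testing $\cD\Psi$ against $\Psi$ and applying the transport Green identity on $W$. I will take that identity as standard (\cite{Dahmen2012APG,Tervo2021Regularity}):
\begin{align*}
    \int_D (\bOmega\cdot\nabla_{\br}\Psi)\,\Psi\,\mathrm{d}\br\,\mathrm{d}\bOmega=\tfrac12\|\omega^{1/2}\gamma^+\Psi\|_{0,\Gamma^+}^2-\tfrac12\|\omega^{1/2}\gamma^-\Psi\|_{0,\Gamma^-}^2 .
\end{align*}
It holds for $\Psi\in W$ under the convexity and Lipschitz assumptions. To justify it, I would first prove it for smooth functions, where it is just the divergence theorem in $\br$ for fixed $\bOmega$ applied to $\tfrac12\bOmega\Psi^2$, and then pass to $W$ by density of smooth functions in the graph norm together with continuity of $\gamma^\pm$. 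Convexity of $D_{\br}$ is what makes these trace results apply cleanly.

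For $\Psi\in W_0^-$ the inflow term vanishes, so $(\bOmega\cdot\nabla_{\br}\Psi,\Psi)\ge 0$. Hence
\begin{align*}
    (\cD\Psi,\Psi)\ge(\Sigma_t\Psi,\Psi)\ge\sigma_*\|\Psi\|_{0,D}^2 .
\end{align*}
Cauchy--Schwarz on the left-hand side then gives $\sigma_*\|\Psi\|_{0,D}\le\|\cD\Psi\|_{0,D}$, that is, $\|\Psi\|_{0,D}\le\sigma_*^{-1}\|\cD\Psi\|_{0,D}$.

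The streaming term follows from the triangle inequality:
\begin{align*}
    \|\bOmega\cdot\nabla_{\br}\Psi\|_{0,D}\le\|\cD\Psi\|_{0,D}+\sigma^*\|\Psi\|_{0,D}\le\left(1+\tfrac{\sigma^*}{\sigma_*}\right)\|\cD\Psi\|_{0,D}.
\end{align*}
Squaring both bounds and adding them gives $\|\Psi\|_{\mathrm{gr}}^2\le\bigl[\sigma_*^{-2}+(1+\sigma^*/\sigma_*)^2\bigr]\|\cD\Psi\|_{0,D}^2$, which is \eqref{Eq:graph-control-by-D}.

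The converse \eqref{Eq:D-control-by-graph} is immediate and needs no boundary condition. By the triangle inequality, $\|\cD\Psi\|_{0,D}\le\|\bOmega\cdot\nabla_{\br}\Psi\|_{0,D}+\sigma^*\|\Psi\|_{0,D}$, and each of the two norms on the right is at most $\|\Psi\|_{\mathrm{gr}}$. This gives the constant $1+\sigma^*$; it could be sharpened to $\sqrt{1+(\sigma^*)^2}$ by Cauchy--Schwarz, but the weaker form is what is claimed. The two inequalities together give the equivalence on $W_0^-$.

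The main obstacle is the rigorous validity of the Green identity on all of $W$, rather than only on smooth functions. The traces lie only in weighted $L^2$ spaces, and continuity of $\gamma^\pm$ on $W$ is known to require care. I would cite the trace theory from the references for this step. A possible fallback is to use only an inequality, $(\bOmega\cdot\nabla\Psi,\Psi)\ge 0$ for $\Psi\in W_0^-$: prove it for smooth functions vanishing on $\Gamma^-$, then pass to the limit by density of that class in $W_0^-$. That density statement is itself part of the cited theory.
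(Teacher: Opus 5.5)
Your proof is correct and follows the paper's argument step for step. The transport Green identity with vanishing inflow trace gives $(\cD\Psi,\Psi)_{L^2(D)}\ge\sigma_*\|\Psi\|_{0,D}^2$, Cauchy--Schwarz gives the $L^2$ bound, the triangle inequality handles both the streaming term and the converse estimate, and the extension from smooth functions to $W_0^-$ is by density. Of your two justifications of the energy step, rely on the fallback: nonnegativity of $(\bOmega\cdot\nabla_{\br}\Psi,\Psi)_{L^2(D)}$ for smooth functions vanishing on $\Gamma^-$, passed to the limit in the graph norm. For a general $\Psi\in W$ the traces lie only locally in $L^2(\Gamma^\pm,|\bn_{\br}\cdot\bOmega|\,\mathrm{d}\sigma\,\mathrm{d}\bOmega)$, because chords become arbitrarily short near grazing directions, so the Green identity should not be asserted on all of $W$.
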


\begin{proof}
We first prove the estimate for smooth functions in $W_0^-$. By the transport Green identity (\cite{Dahmen2012APG,Tervo2021Regularity}),
\begin{align*}
    \int_{D}(\bOmega\cdot\nabla_{\br}\Psi)\,\Psi\,\mathrm{d}\br\,\mathrm{d}\bOmega=\frac12\int_{\partial D_{\br}\times D_{\bOmega}}(\bn_{\br}\cdot\bOmega)\,|\Psi|^2\,\mathrm{d}\sigma(\br)\,\mathrm{d}\bOmega.
\end{align*}
Splitting the boundary integral into inflow and outflow parts yields
\begin{align*}
    (\bOmega\cdot\nabla_{\br}\Psi,\Psi)_{L^2(D)}=\frac12\|\omega^{1/2}\gamma^+\Psi\|_{0,\Gamma^+}^2-\frac12\|\omega^{1/2}\gamma^-\Psi\|_{0,\Gamma^-}^2.
\end{align*}
Since $\gamma^-\Psi=0$, we obtain
\begin{align}
    (\cD \Psi,\Psi)_{L^2(D)}=(\bOmega\cdot\nabla_{\br}\Psi,\Psi)_{L^2(D)}+(\Sigma_t \Psi,\Psi)_{L^2(D)}=\frac12\|\omega^{1/2}\gamma^+\Psi\|_{0,\Gamma^+}^2+(\Sigma_t \Psi,\Psi)_{L^2(D)}\ge\sigma_*\,\|\Psi\|_{0,D}^2.
    \label{Eq:D-coercive}
\end{align}
By Cauchy--Schwarz,
\begin{align*}
    (\cD \Psi,\Psi)_{L^2(D)}\le\|\cD \Psi\|_{0,D}\,\|\Psi\|_{0,D},
\end{align*}
which together with \eqref{Eq:D-coercive} implies
\begin{align}\label{Eq:basic-L2-control}
    \|\Psi\|_{0,D}&\le\sigma_*^{-1}\,\|\cD \Psi\|_{0,D}.
\end{align}

Next, using
\begin{align*}
\bOmega\cdot\nabla_{\br}\Psi
=
\cD \Psi - \Sigma_t \Psi,
\end{align*}
we obtain
\begin{align}\label{Eq:basic-streaming-control}
    \|\bOmega\cdot\nabla_{\br}\Psi\|_{0,D}\le\|\cD \Psi\|_{0,D}+\|\Sigma_t \Psi\|_{0,D}\le\|\cD \Psi\|_{0,D}+\sigma^*\,\|\Psi\|_{0,D}\le\left(1+\frac{\sigma^*}{\sigma_*}\right)\|\cD \Psi\|_{0,D}.
\end{align}
Combining \eqref{Eq:basic-L2-control} and \eqref{Eq:basic-streaming-control} yields \eqref{Eq:graph-control-by-D}.

For the converse estimate, we write
\begin{align*}
    \|\cD \Psi\|_{0,D}\le\|\bOmega\cdot\nabla_{\br}\Psi\|_{0,D}+\|\Sigma_t \Psi\|_{0,D}\le\|\bOmega\cdot\nabla_{\br}\Psi\|_{0,D}+\sigma^*\,\|\Psi\|_{0,D}\le(1+\sigma^*)\,\|\Psi\|_{\mathrm{gr}},
\end{align*}
which is \eqref{Eq:D-control-by-graph}. The extension from smooth functions to all $\Psi\in W_0^-$ follows from the standard density argument together with the continuity of the weighted trace operators.
\end{proof}

The preceding theorem shows that, on the homogeneous inflow space, controlling the $L^2$ residual of the streaming--absorption operator is equivalent to controlling the natural transport graph norm. In particular, under the vacuum boundary condition, the inflow datum satisfies the homogeneous inflow. 

\begin{remark}[Residual control with inflow mismatch]
Nonhomogeneous inflow data can likewise be treated by means of a lifting argument. Assume the hypotheses of Theorem \ref{thm:graphnorm}, and let $\Psi^*\in W$ be the solution of
\begin{align*}
    \cD \Psi^* = f \quad \text{in } D,\qquad\gamma^-\Psi^* = g \quad \text{on } \Gamma^-,
\end{align*}
where $f\in L^2(D)$ and $g\in L^2(\Gamma^-,|\bn_{\br}\!\cdot\!\bOmega|\,\mathrm d\sigma\,\mathrm d\bOmega)$. If the inflow trace admits a bounded lifting into $W$, then for any $\Psi\in W$,
\begin{align*}
    \|\Psi^*-\Psi\|_{\mathrm{gr}}\le C_{\mathrm{res}}\Big(\|\cD\Psi-f\|_{0,D}+\|\omega^{1/2}(\gamma^-\Psi-g)\|_{0,\Gamma^-}\Big),
\end{align*}
where $C_{\mathrm{res}}$ depends only on $\sigma_*$, $\sigma^*$, and the lifting bound.
\end{remark}

\begin{remark}[Scope for reflecting boundaries]
    Theorem \ref{thm:graphnorm} is stated for vacuum boundary. For reflecting boundary, the same weighted trace norm can be applied to the boundary residual $\cB\Psi_\rho$, but the corresponding well-posedness argument requires the reflecting boundary operator to be incorporated explicitly into the trace formulation. We do not pursue that extension here.
\end{remark}

\begin{remark}[Role of the scattering operator]
    The graph norm above is determined by the streaming--absorption operator $\cD$. For the full transport operator $\cD+\cI$, the scattering term $\cI$ acts as a bounded perturbation under standard boundedness assumptions on the scattering kernel. Consequently, the full residual may still serve as a practically meaningful objective. However, the rigorous graph-norm equivalence established in this paper applies only to the streaming--absorption part $\cD$. If, in addition, $\cI$ is sufficiently small relative to the absorption in the sense that
\begin{align*}
    \|\cI \Psi\|_{0,D}\le C_I\|\Psi\|_{0,D}\qquad\text{with } C_I<\sigma_*,
\end{align*}
then the above argument extends verbatim and yields a genuine graph-norm estimate with $\cD$ replaced by $\cD+\cI$. This smallness assumption is not imposed in the numerical experiments below, it is stated only to illustrate a regime in which the full operator inherits the same coercivity mechanism. For general scattering-dominated regimes, it is therefore most accurate to interpret the residual of $\cD+\cI$ as a bounded perturbation of the graph-norm-controlling quantity associated with $\cD$.

If the cross sections are constant, then the boundedness of the scattering operator $\cI$ on $L^2(D)$ can often be verified explicitly. However, constancy of the coefficients alone does not imply the smallness condition $C_I<\sigma_*$.  In the one-group isotropic case, one has $C_I=\Sigma_s$, so that $C_I<\sigma_*$ is equivalent to $\Sigma_s<\Sigma_t$, i.e., to the presence of strictly positive absorption. 
\end{remark}

\section{Numerical Experiments} \label{sec:experiments}
In this section, we evaluate the RaNN framework on the neutron transport problems described above. All experiments are conducted in MATLAB. To ensure reproducibility, the random number generator is fixed using ``rng(1)''. The least-squares problems are solved via the QR method using the ``linsolve'' function with option ``opts.RECT = true''. The sketching technique is used in Example \ref{ex3}--\ref{ex5}; all other examples solve the original least-squares problem. All numerical results are reported for the neutron scalar flux \eqref{Eq:flux}.

Examples \ref{ex1}--\ref{ex2} were run on a desktop (Intel Core i7-13700KF, 32 GB DDR5), while other examples were run on a server (dual Intel Xeon Gold 6240, 192 GB DDR4).

The activation function is chosen as the Gaussian
\begin{align*}
\rho(x)=\exp(-x^2/2).
\end{align*}
Accuracy is measured by the relative $\ell^2$ error of the scalar flux on a spatial test grid,
\begin{align*}
    e_{\ell^2}(\Phi_\rho)
    =
    \sqrt{
    \frac{\sum_{i=1}^N\left(\Phi_\rho(\bx_i)-\Phi(\bx_i)\right)^2}
         {\sum_{i=1}^N\left(\Phi(\bx_i)\right)^2}
    },
\end{align*}
where $\Phi$ denotes the reference neutron scalar flux, and $\bx_i$ denotes a test point at which a reference value is available.

Since analytical solutions are generally unavailable, we use NECP-MCX (hereafter abbreviated as MCX) to generate reference solutions for all examples except Example \ref{ex1}. MCX (\cite{He2021MCX}) is based on a hybrid Monte Carlo--deterministic methodology, in which deterministic calculations are used to construct mesh-based weight-window and source-biasing parameters that guide the Monte Carlo simulation and reduce variance.

For the pin-cell problems in Examples \ref{ex3}--\ref{ex5}, the MCX input parameters and the corresponding simulation statistics are reported in Table \ref{table:MCX}. A uniform $50\times 50$ mesh is used throughout, and we report the minimum, maximum, and average relative standard deviations (RSD) over all grid cells.

\begin{table}[!htbp]
\centering
\begin{tabular}{|c|c|c|c|c|c|c|c|}
\hline
\multicolumn{2}{|c|}{} & batch & particles & RSD$_\text{min}$ & RSD$_\text{max}$ & RSD$_\text{average}$ \\ \hline\hline
\multirow{3}{*}{Example \ref{ex3}} & Case 1 & 1000 & 100000 & 7.2572e-04 & 1.6919e-03 & 9.9630e-04 \\ \cline{2-7} 
    & Case 2 & 10000 & 500000 & 5.3084e-04 & 1.0547e-03 & 6.6250e-04 \\ \cline{2-7} 
    & Case 3 & 1000 & 200000 & 6.1908e-04 & 1.2144e-03 & 7.5330e-04 \\ \hline\hline
\multirow{3}{*}{Example \ref{ex4}} & Case 1 & 1000 & 20000 & 6.0193e-04 & 9.7435e-04 & 6.7397e-04 \\ \cline{2-7} 
    & Case 2 & 1000 & 20000 & 2.7252e-03 & 4.1608e-03 & 3.0236e-03 \\ \cline{2-7} 
    & Case 3 & 1000 & 20000 & 6.1269e-04 & 9.5861e-04 & 6.8497e-04 \\ \hline\hline
    & Group 1 & \multirow{7}{*}{2000} & \multirow{7}{*}{2000000} & 6.7725e-05 & 1.0417e-04 & 7.3931e-05 \\ \cline{2-2} \cline{5-7}
    & Group 2 &  &  & 1.4741e-04 & 2.1600e-04 & 1.5878e-04 \\ \cline{2-2} \cline{5-7}
    & Group 3 &  &  & 5.0001e-04 & 7.0614e-04 & 5.6167e-04 \\ \cline{2-2} \cline{5-7}
Example \ref{ex5}& Group 4 &  &  & 1.3050e-03 & 1.8322e-03 & 1.4997e-03 \\ \cline{2-2} \cline{5-7}
    & Group 5 &  &  & 2.2948e-03 & 3.1820e-03 & 2.5513e-03 \\ \cline{2-2} \cline{5-7}
    & Group 6 &  &  & 3.8751e-03 & 5.4953e-03 & 4.4291e-03 \\ \cline{2-2} \cline{5-7}
    & Group 7 &  &  & 7.5156e-03 & 1.1594e-02 & 9.4983e-03 \\ \hline
\end{tabular}
\caption{The input parameters and the output statistics for MCX in Example \ref{ex3} - Example \ref{ex5}.}
\label{table:MCX}
\end{table}

\begin{example}[1D Slab Problem] \label{ex1}
In this example, we consider the benchmark problem of a one-dimensional critical slab with one-dimensional spatial and one-dimensional angular variables. The transport equation is
\begin{align*}
    \mu\frac{\partial \Psi(x,\mu)}{\partial x}+\Sigma_t(x)\Psi(x,\mu)=\frac{1}{2}\left(\Sigma_s(x)+\frac{\nu\Sigma_f(x)}{k_{\text{eff}}}\right)\int_{-1}^1\Psi(x,\mu')\mathrm{d}\mu',\quad\forall(x,\mu)\in(-b,b)\times(-1,1),
\end{align*}
with vacuum boundary conditions
\begin{align*}
    &\Psi(b,\mu)=0, \quad\forall\mu\in[-1,0],\\
    &\Psi(-b,\mu)=0, \quad\forall\mu\in[0,1].
\end{align*}
Material parameters are $\Sigma_t(x)=5\,m^{-1}$, $\Sigma_s(x)=3\,m^{-1}$, $\nu\Sigma_f(x)=2.25\,m^{-1}$, and slab thickness $b=0.6600527544\,m$. At criticality, $k_\text{eff}=1$. This example is treated as a fixed-eigenvalue benchmark at criticality, and the eigenfunction is normalized by prescribing two anchor values:
\begin{align*}
    \Psi(0,-1)=0.2, \quad\Psi(0,1)=0.2.
\end{align*}
\end{example}

This benchmark (\cite{Naz2007benchmark}) admits reference values at selected locations, which we use to validate the numerical results. We compare a finite difference method (FDM) with discrete ordinates and the RaNN method. We use the trapezoidal rule with $200$ quadrature points in the discrete ordinates method ($S_{200}$), and $M$ denotes the number of spatial mesh points. The values at the benchmark locations are obtained using linear interpolation. For the RaNN method, we set $N^I=50\times50$, $N^B=500+500$, and $r=10$. Numerical results are shown in Table \ref{table:ex1}. The value at $x=0$ is used as the normalization point, so the relative error there is identically zero for all methods.

\begin{table}[!htbp]
\centering
\begin{tabular}{|c|c|c|c|c|c|c|c|}
\hline
\multicolumn{2}{|c|}{$x/b$} & $0$ & $0.25$ & $0.5$ & $0.75$ & $1$ & time(s)\\ \hline\hline
\multicolumn{2}{|c|}{reference values (\cite{Naz2007benchmark})} & $1$ & $0.947144$ & $0.793726$ & $0.553290$ & $0.214192$ & \\ \hline\hline
\multirow{3}{*}{$S_{200}$} & $M=100$ & 0 & 2.5234e-03 & 7.1902e-03 & 3.3957e-03 & 5.5280e-03 & 0.685501 \\ \cline{2-8} 
    & $M=200$ & 0 & 1.3404e-03 & 2.1758e-04 & 1.3529e-03 & 1.6924e-03 & 1.353178 \\ \cline{2-8} 
    & $M=300$ & 0 & 3.3128e-04 & 3.7247e-03 & 4.1807e-04 & 8.0292e-04 & 1.967209 \\ \hline\hline
\multirow{3}{*}{RaNN} & $m=200$ & 0 & 2.5791e-04 & 1.2351e-03 & 9.9065e-04 & 9.4649e-02 & 0.017503 \\ \cline{2-8} 
    & $m=500$ & 0 & 2.3516e-05 & 5.2369e-05 & 6.4748e-05 & 4.5558e-02 & 0.041313 \\ \cline{2-8} 
    & $m=1000$ & 0 & 9.8406e-05 & 8.6062e-06 & 1.2036e-04 & 2.8078e-02 & 0.123724 \\ \hline
\end{tabular}
\caption{Pointwise relative errors and runtimes for the FDM and the RaNN method in Example \ref{ex1}. }
\label{table:ex1}
\end{table}

RaNN exhibits a clear accuracy--cost tradeoff as the number of random features increases. As $m$ increases from 200 to 1000, the pointwise relative errors decrease substantially, while the runtime increases only moderately and remains below that of the discrete-ordinates finite-difference baseline. These results indicate that, for this smooth one-dimensional transport benchmark, RaNN can achieve competitive accuracy with a relatively small number of degrees of freedom.

\begin{example}[2D Cylinder Problem] \label{ex2}
This example considers an infinite-height cylinder, involving one-dimensional spatial and two-dimensional angular variables. The transport equation is
\begin{align*}
    \sqrt{1-\mu^2}\left(\cos\varphi\frac{\partial}{\partial x}-\frac{\sin\varphi}{x}\frac{\partial}{\partial \varphi}\right)\Psi(x,\varphi,\mu)+\Sigma_t(x)\Psi(x,\varphi,\mu)=Q_s+Q_f,\quad\forall(x,\varphi,\mu)\in(0,R)\times(0,2\pi)\times(-1,1),
\end{align*}
where
\begin{align*}
    Q_s=\frac{1}{4\pi}\Sigma_s(x)\int_{-1}^1\int_0^{2\pi}\Psi(x,\varphi',\mu')\mathrm{d}\varphi'\mathrm{d}\mu',\quad
    Q_f=0.2\cos\left(\frac{\pi x}{2R}\right),\quad
    R=1.08225766\,m,
\end{align*}
and the boundary condition is vacuum:
\begin{align*}
    \Psi(R,\varphi,\mu)=0,\quad\forall(\varphi,\mu)\in\left[\frac{\pi}{2},\frac{3\pi}{2}\right]\times[-1,1].
\end{align*}
To avoid numerical issues near $x=0$ due to the $1/x$ term, we regularize the coefficient by replacing $x$ with $\max(x,10^{-4})$ in the implementation.

\noindent\textbf{Case 1.}
The material parameters are $\Sigma_t(x)=5\,m^{-1}$, $\Sigma_s(x)=3\,m^{-1}$.

\noindent\textbf{Case 2.}
The material parameters are
\begin{align*}
    \Sigma_t(x)=
    \begin{cases}
        5\,m^{-1},\quad 0<x<R/2\\0.5\,m^{-1},\quad R/2<x<R    
    \end{cases}
    \quad\text{and}\quad
    \Sigma_s(x)=
    \begin{cases}
        3\,m^{-1},\quad 0<x<R/2\\0.3\,m^{-1},\quad R/2<x<R.
    \end{cases}
\end{align*}
\end{example}

We compare the PINN method in \cite{Liu2025IDNT} with the RaNN method. The reference scalar flux is generated by MCX. The PINN results and computational times are taken directly from the reference, as the experiments were carried out under different hardware (Apple M1 Pro GPU) and software environments. Importantly, our comparison does not aim at a strict wall-clock-time benchmark under identical conditions. Instead, we emphasize a method-level comparison of computational efficiency, taking into account the fundamentally different training paradigms: iterative gradient-based optimization for PINNs versus linear least-squares solvers for RaNNs. This comparison is therefore intended only to provide a qualitative indication of the different computational characteristics of the two training paradigms.

\noindent\textbf{Case 1.}
We set $N^I=30\times30\times30$, $N^B=50\times50$, and $r=2$ for the RaNN method and vary the number of neurons $m$. The relative errors are presented in Table \ref{table:ex2_case1}. The PINN results correspond to the best-performing cases reported in \cite{Liu2025IDNT} (``PINN IDNT 4000'' and ``PINN EDNT 8000'' in Table 4 of \cite{Liu2025IDNT}). Numerical solutions and absolute differences are shown in Figure \ref{ex2_case1} for $m=800$.

\begin{table}[!htbp]
\centering
\begin{tabular}{|c||c|c|c|c||c|c|}
\hline
$m$ & 100 & 200 & 400 & 800 & PINN-IDNT & PINN-EDNT\\\hline
$e_{\ell^2}$ & 5.1349e-02 & 5.2151e-03 & 1.8812e-03 & 1.4699e-03 & 3.5101e-03 & 4.8811e-03 \\\hline 
time(s) & 0.0478 & 0.1049 & 0.2501 & 0.4822 & 28991.51 & 6525.75 \\\hline 
\end{tabular}
\caption{Relative $\ell^2$ errors of the scalar flux for RaNN and PINN in Example \ref{ex2} (Case 1). PINN results are quoted from \cite{Liu2025IDNT} and were obtained under different experimental conditions.}
\label{table:ex2_case1}
\end{table}

\begin{figure}[!htbp]
    \centering
    \includegraphics[width=0.4\textwidth]{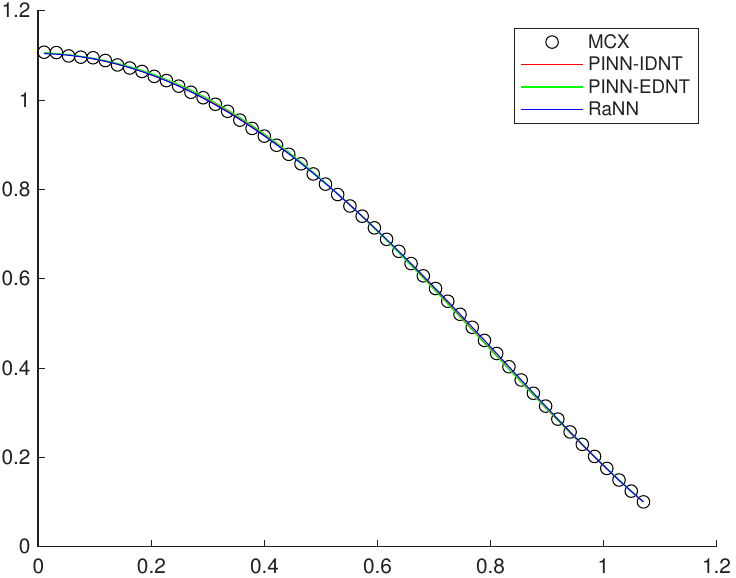}
    \includegraphics[width=0.4\textwidth]{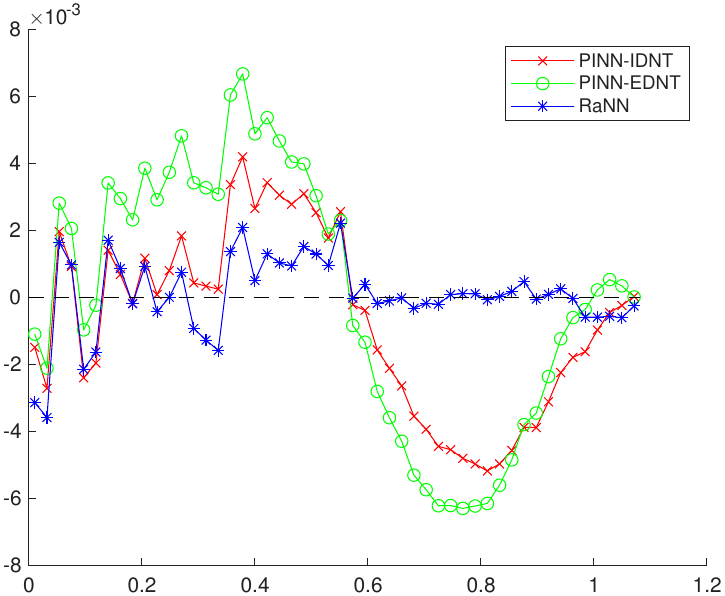}
    \caption{Scalar-flux profiles (left) and pointwise absolute differences from the MCX reference (right) for Example \ref{ex2} (Case 1). Results are shown for RaNN ($m=800$) and the PINN reported in \cite{Liu2025IDNT}.}
    \label{ex2_case1}
\end{figure}

\noindent\textbf{Case 2.}
This case involves two materials. We split the spatial domain into
\begin{align*}
D_1=(0,R/2)\times(0,2\pi)\times(-1,1),
\qquad
D_2=(R/2,R)\times(0,2\pi)\times(-1,1),
\end{align*}
and use two RaNNs $\Psi_\rho^1$ and $\Psi_\rho^2$. To couple the two subdomain networks, we enforce continuity of the angular flux on the interface
\begin{align*}
\Gamma_I=\{R/2\}\times[0,2\pi]\times[-1,1]
\end{align*}
through the weighted interface penalty
\begin{align*}
    \frac{|\Gamma_I|}{N^F}\sum_{i=1}^{N^F}
    \sqrt{1-\mu_i^2}\,|\cos\varphi_i|\,
    \big(\Psi_\rho^{1}(\bx_i^F)-\Psi_\rho^{2}(\bx_i^F)\big)^2,
\end{align*}
where $\bx_i^F=(R/2,\varphi_i,\mu_i)\in\Gamma_I$. Here the weight
\begin{align*}
    \omega_I=|\bn_I\cdot\bOmega|=\sqrt{1-\mu^2}\,|\cos\varphi|
\end{align*}
is exactly the transport trace weight on the interface, with $\bn_I$ the unit normal to $\Gamma_I$. We set $N^I=30\times30\times30$, $N^B=30\times30$, $N^F=30\times30$, $r_1=2$, $r_2=3$, and $m_1=m_2=2000$. Relative errors and runtime are reported in Table \ref{table:ex2_case2}, solutions and absolute differences are shown in Figure \ref{ex2_case2}.

\begin{table}[!htbp]
\centering
\begin{tabular}{|c||c|c|c|}
\hline
    & LRaNN & PINN-IDNT & PINN-EDNT\\\hline
$e_{\ell^2}$ & 1.5945e-02 & 3.8502e-02 & 3.7632e-02 \\\hline 
time(s) & 6.85 & 34706.51 & 4542.73 \\\hline 
\end{tabular}
\caption{Relative $\ell^2$ errors of LRaNN method and PINN method in Case 2 of Example \ref{ex2}. }
\label{table:ex2_case2}
\end{table}

\begin{figure}[!htbp]
    \centering
    \includegraphics[width=0.4\textwidth]{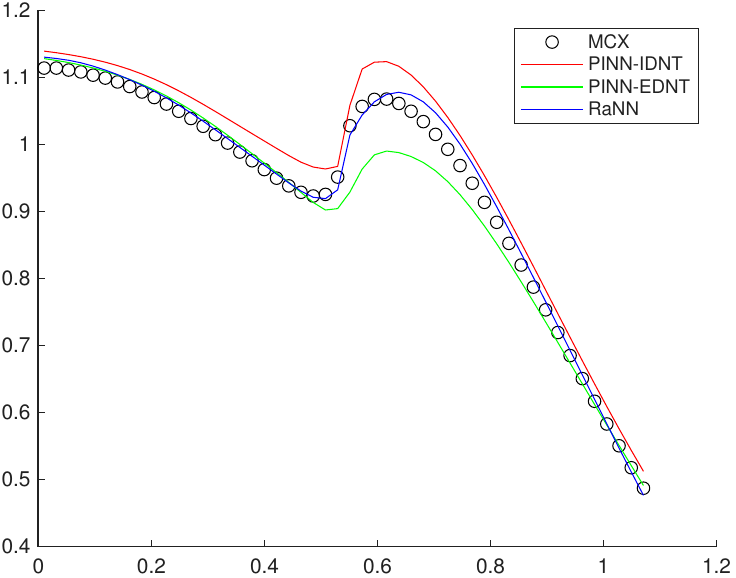}
    \includegraphics[width=0.4\textwidth]{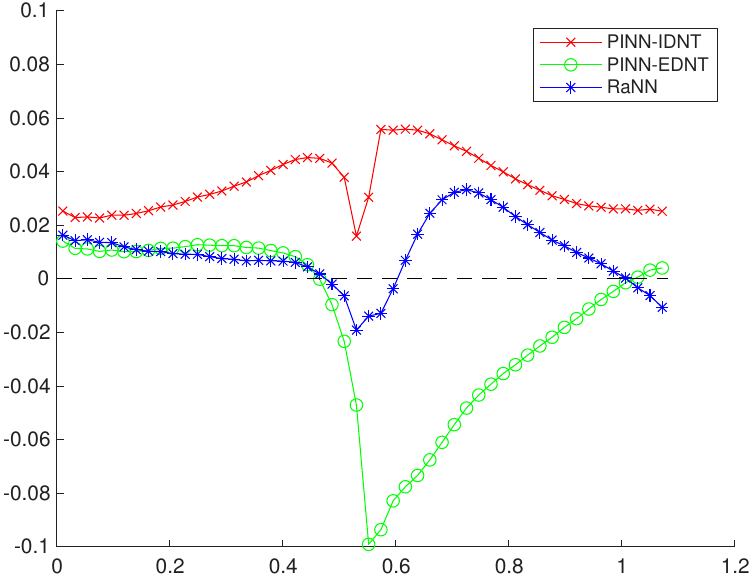}
    \caption{Scalar-flux profiles (left) and pointwise absolute differences from the MCX reference (right) for Example \ref{ex2} (Case 2). Results are shown for LRaNN and the PINN reported in \cite{Liu2025IDNT}.}
    \label{ex2_case2}
\end{figure}

RaNN achieves competitive accuracy with substantially reduced training complexity relative to gradient-based PINN methods. In the single-material case, increasing the number of neurons improves the scalar-flux accuracy rapidly, and moderate feature counts already produce relative $\ell^2$ errors comparable to the best reported PINN results, while requiring only a single linear least-squares solve. In the two-material case, the local RaNN formulation with an interface continuity penalty shows better performance for the reported heterogeneous setting and yields smaller errors than the reported PINN baselines. These observations suggest that fixing the nonlinear parameters and solving only for the linear output weights can reduce optimization sensitivity and provide an efficient alternative for transport models with nonlocal scattering.

\begin{figure}[!htbp]
    \centering
    \includegraphics[width=0.4\textwidth]{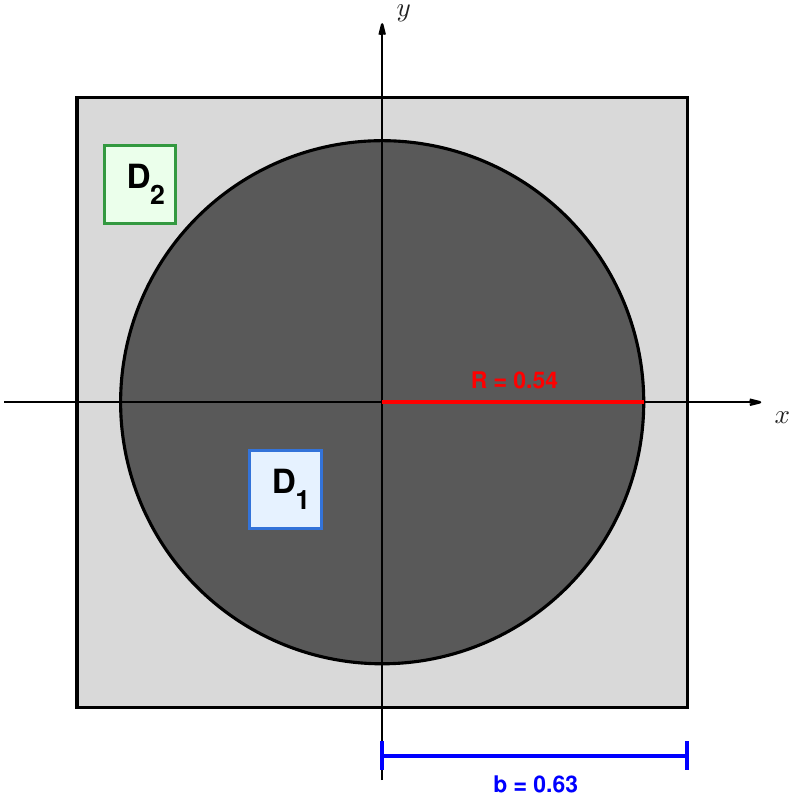}
    \includegraphics[width=0.4\textwidth]{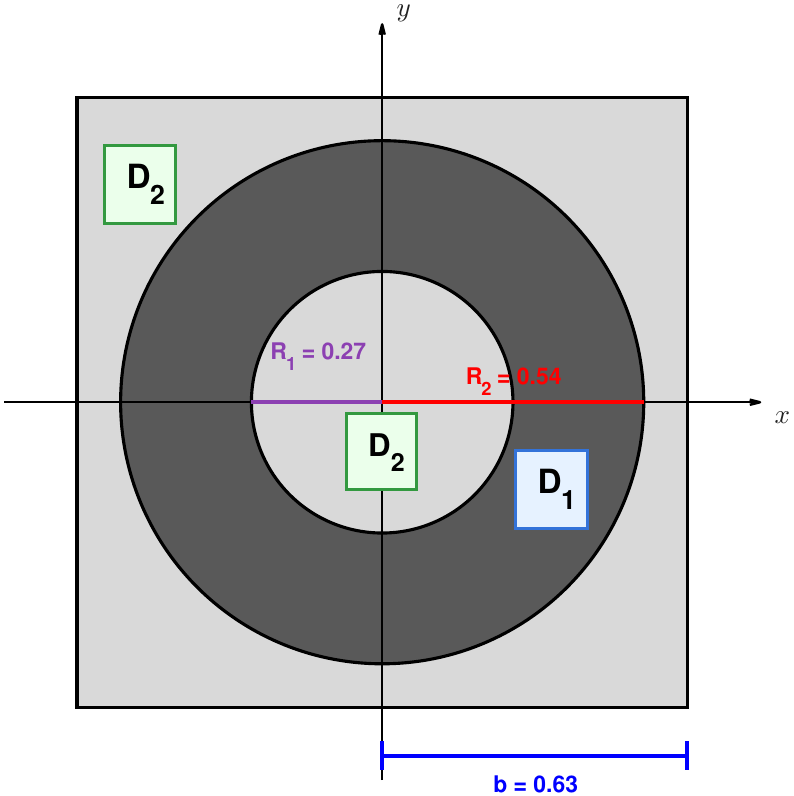}
    \caption{Schematic illustrations of the pin cells in Example \ref{ex3} and Example \ref{ex4}: Cases 1 and 2 are shown on the left, and Case 3 is shown on the right.}
    \label{fig:pincell}
\end{figure}

\begin{example}[Pin-cell Problem with Vacuum Boundary] \label{ex3}
We consider a two-dimensional pin-cell problem with two spatial variables $(x,y)$ and two angular variables $(\varphi,\mu)$. Figure \ref{fig:pincell} provides schematic illustrations of the pin-cell configurations. The steady-state transport equation is given by
    \begin{align*}
        \sqrt{1-\mu^2}\left(\cos\varphi\frac{\partial}{\partial x}+\sin\varphi\frac{\partial}{\partial y}\right)\Psi(x,y,\varphi,\mu)&+\Sigma_t(x,y)\Psi(x,y,\varphi,\mu) \\ 
        &=\frac{1}{4\pi}\Sigma_s(x,y)\int_{-1}^1 \int_0^{2\pi}\Psi(x,y,\varphi',\mu')\mathrm{d}\varphi'\mathrm{d}\mu' +\frac{1}{4\pi}Q(x,y),\nonumber
    \end{align*}
for all $(x,y,\varphi,\mu)\in(-b,b)^2\times(0,2\pi)\times(-1,1)$.
The boundary condition is a vacuum boundary condition, i.e.,
    \begin{align*}
        &\Psi(-b,y,\varphi,\mu)=0, \quad(y,\varphi,\mu)\in[-b,b]\times \left([0,\pi/2]\cup[3\pi/2,2\pi]\right)\times[-1,1],\\
        &\Psi(b,y,\varphi,\mu)=0, \quad(y,\varphi,\mu)\in[-b,b]\times[\pi/2,3\pi/2]\times[-1,1],\\
        &\Psi(x,-b,\varphi,\mu)=0, \quad(x,\varphi,\mu)\in[-b,b]\times[0,\pi]\times[-1,1],\\
        &\Psi(x,b,\varphi,\mu)=0, \quad(x,\varphi,\mu)\in[-b,b]\times[\pi,2\pi]\times[-1,1].
    \end{align*}
\noindent\textbf{Case 1.}
The pin-cell consists of two spatial subdomains,
\begin{align*}
    &D_{\br}^1=\{(x,y):x^2+y^2<R^2,R=0.54\,cm\},\\
    &D_{\br}^2=\{(x,y):(-b,b)^2\setminus\bar{D}_{\br}^1,b=0.63\,cm\}.
\end{align*}
The material parameters and the source term are 
\begin{align*}
    \Sigma_t(x,y)=1.25445\,cm^{-1},
    \quad
    \Sigma_s(x,y)=1.12\,cm^{-1},
    \quad
    Q(x,y)=
    \begin{cases}
        1,\quad(x,y)\in D_{\br}^1,\\
        0,\quad(x,y)\in D_{\br}^2.
    \end{cases}
\end{align*}
\noindent\textbf{Case 2.}
The pin-cell contains two parts:
\begin{align*}
    &D_{\br}^1=\{(x,y):x^2+y^2<R^2,R=0.54\,cm\},\\
    &D_{\br}^2=\{(x,y):(-b,b)^2\setminus\bar{D}_{\br}^1,b=0.63\,cm\}.
\end{align*}
The material parameters and the source term are 
\begin{align*}
&    \Sigma_t(x,y)=
    \begin{cases}
        0.395168\,cm^{-1},\quad(x,y)\in D_{\br}^1,\\
        1.25445\,cm^{-1},\quad(x,y)\in D_{\br}^2,
    \end{cases}
    \Sigma_s(x,y)=
    \begin{cases}
        0.265802\,cm^{-1},\quad(x,y)\in D_{\br}^1,\\
        1.12\,cm^{-1},\quad(x,y)\in D_{\br}^2,
    \end{cases}\\
&    Q(x,y)=
    \begin{cases}
        1,\quad(x,y)\in D_{\br}^1,\\
        0,\quad(x,y)\in D_{\br}^2.
    \end{cases}
\end{align*}
\noindent\textbf{Case 3.}
The pin-cell contains two parts:
\begin{align*}
    &D_{\br}^1=\{(x,y):R_1^2<x^2+y^2<R_2^2,R_1=0.27\,cm,R_2=0.54\,cm\},\\
    &D_{\br}^2=\{(x,y):(-b,b)^2\setminus\bar{D}_{\br}^1,b=0.63\,cm\}.
\end{align*}
The material parameters and the source term are 
\begin{align*}
&    \Sigma_t(x,y)=
    \begin{cases}
        0.395168\,cm^{-1},\quad(x,y)\in D_{\br}^1,\\
        1.25445\,cm^{-1},\quad(x,y)\in D_{\br}^2,
    \end{cases}
    \Sigma_s(x,y)=
    \begin{cases}
        0.265802\,cm^{-1},\quad(x,y)\in D_{\br}^1,\\
        1.12\,cm^{-1},\quad(x,y)\in D_{\br}^2,
    \end{cases}\\
&    Q(x,y)=
    \begin{cases}
        1,\quad(x,y)\in D_{\br}^1,\\
        0,\quad(x,y)\in D_{\br}^2.
    \end{cases}
\end{align*}
\end{example}

\begin{table}[!htbp]
\centering
\begin{tabular}{|c|c||c|c|c|c|c|c|}
\hline
    \multicolumn{2}{|c||}{} & Case 1 & Case 2 & Case 3 \\\hline
    \multicolumn{2}{|c||}{$r$} & 2 & 2 & 2 \\\hline
    \multirow{2}{*}{RaNN} & $e_{\ell^2}$ & 2.1970e-02 & 2.2756e-02 & 6.5209e-02 \\\cline{2-5}
    & time(s) & 682.04 & 686.78 & 655.53 \\\hline
    \multirow{2}{*}{RaNN-S} & $e_{\ell^2}$ & 2.5098e-02 & 2.4902e-02 & 6.7096e-02 \\\cline{2-5}
    & time(s) & 392.22 & 369.59 & 342.40 \\\hline
\end{tabular}
\caption{Parameter settings and relative $\ell^2$ errors of the scalar flux for RaNN in Example \ref{ex3} (RaNN-S: sketching acceleration with the same settings as RaNN).}
\label{table:ex3}
\end{table}

\begin{figure}[!htbp]
    \centering
    \includegraphics[width=\textwidth]{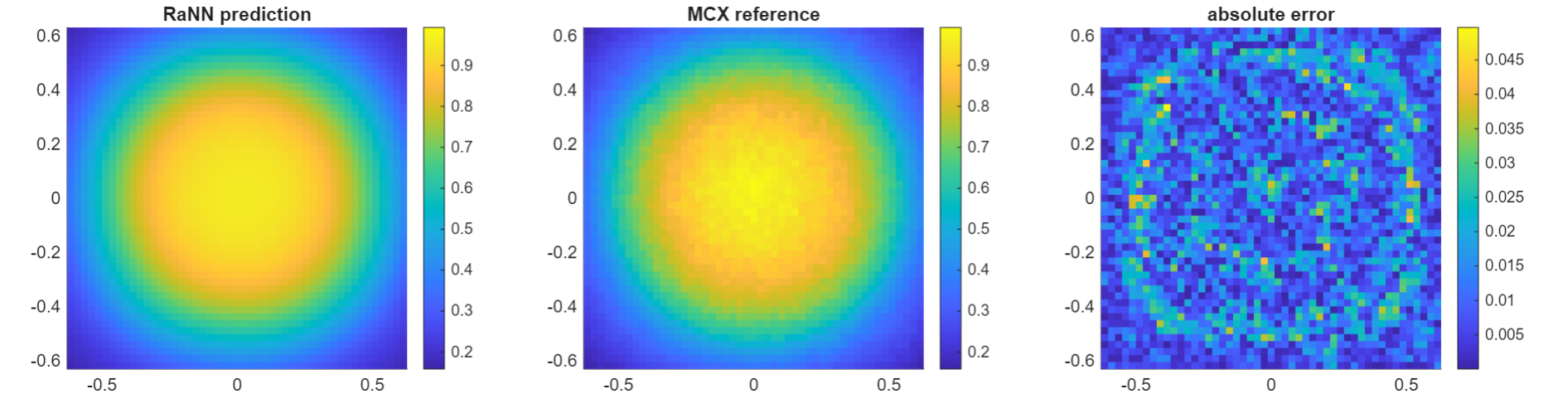}
    \caption{Scalar flux for Example \ref{ex3} (Case 1): RaNN prediction (left), MCX reference (middle), and absolute error (right).}
    \label{ex3-vacuumboundary-1region-1net}
\end{figure}

\begin{figure}[!htbp]
    \centering
    \includegraphics[width=\textwidth]{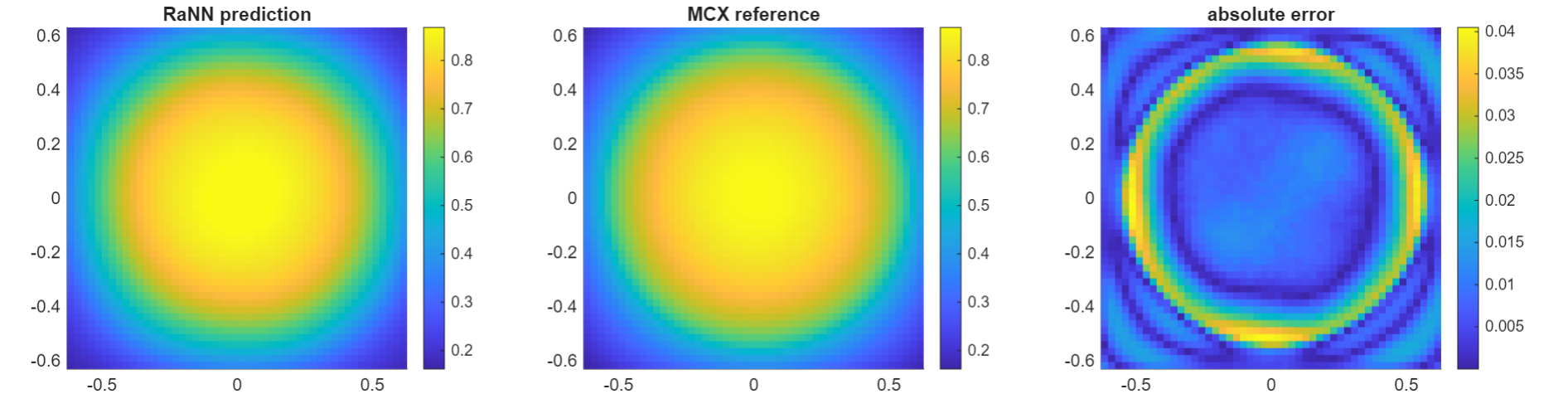}
    \caption{Scalar flux for Example \ref{ex3} (Case 2): RaNN prediction (left), MCX reference (middle), and absolute error (right).}
    \label{ex3-vacuumboundary-2region-1net}
\end{figure}

\begin{figure}[!htbp]
    \centering
    \includegraphics[width=\textwidth]{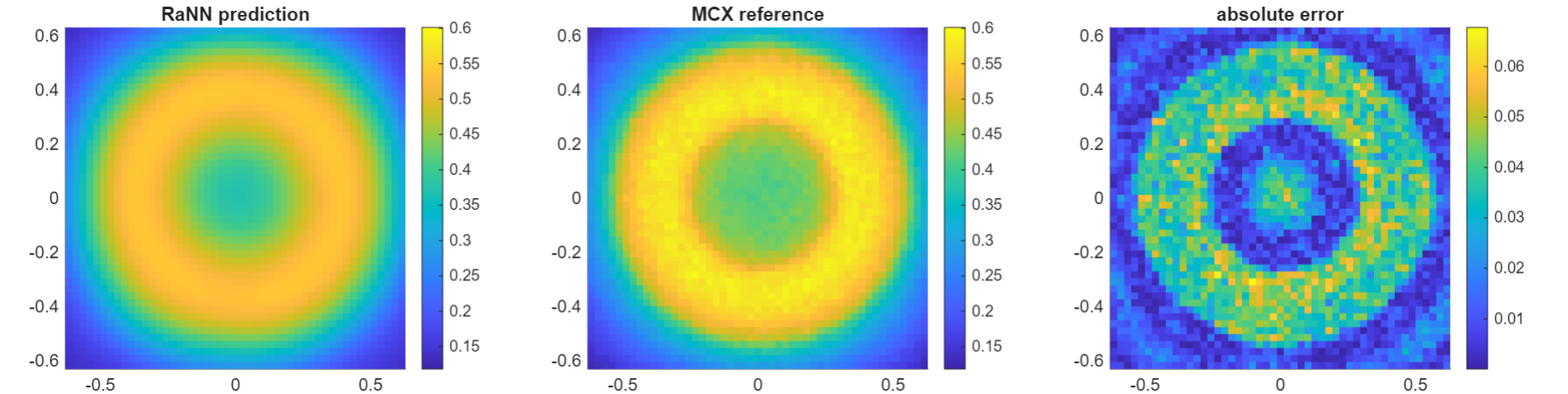}
    \caption{Scalar flux for Example \ref{ex3} (Case 3): RaNN prediction (left), MCX reference (middle), and absolute error (right).}
    \label{ex3-vacuumboundary-3region-1net}
\end{figure}

In this example, we have the same setting: $N^I=31\times31\times31\times31$, $N^B=4\times30\times30\times30$ and $m=8000$. In the RaNN method, the parameters and relative $\ell^2$ errors are shown in Table \ref{table:ex3}. For the different cases considered, the numerical solutions, reference solutions, and absolute errors (without sketching acceleration) are presented in Figures \ref{ex3-vacuumboundary-1region-1net}, \ref{ex3-vacuumboundary-2region-1net}, and \ref{ex3-vacuumboundary-3region-1net}.

The RaNN method remains stable at large collocation sizes but becomes dominated by the cost of solving the resulting large least-squares system, reflecting the high-dimensional phase-space discretization. Across the three material configurations, the method attains moderate accuracy, with the annular-fuel configuration (Case 3) being the most challenging and producing the largest error, consistent with its stronger spatial heterogeneity and sharper solution features. Sketching substantially reduces the computational time while preserving essentially the same accuracy, demonstrating that randomized compression is effective for accelerating the linear algebra stage when the augmented system has many more rows than columns. This example indicates that scalable implementations of RaNN for high-dimensional transport benefit strongly from randomized numerical linear algebra techniques.

\begin{example}[Pin-cell Problem with Reflecting Boundary] \label{ex4}
    In this example, we consider the same three material configurations as in Example \ref{ex3}, but impose reflecting boundary conditions. In the present setting, the reflection map on each side of the square domain is given by
    \begin{align*}
        &\Psi(-b,y,\varphi,\mu)=\Psi(-b,y,\pi-\varphi,\mu), \quad(y,\varphi,\mu)\in[-b,b]\times[0,\pi/2]\times[-1,1],\\
        &\Psi(-b,y,\varphi,\mu)=\Psi(-b,y,3\pi-\varphi,\mu), \quad(y,\varphi,\mu)\in[-b,b]\times[3\pi/2,2\pi]\times[-1,1],\\
        &\Psi(b,y,\varphi,\mu)=\Psi(b,y,\pi-\varphi,\mu), \quad(y,\varphi,\mu)\in[-b,b]\times[\pi/2,\pi]\times[-1,1],\\
        &\Psi(b,y,\varphi,\mu)=\Psi(b,y,3\pi-\varphi,\mu), \quad(y,\varphi,\mu)\in[-b,b]\times[\pi,3\pi/2]\times[-1,1],\\
        &\Psi(x,-b,\varphi,\mu)=\Psi(x,-b,2\pi-\varphi,\mu), \quad(x,\varphi,\mu)\in[-b,b]\times[0,\pi]\times[-1,1],\\
        &\Psi(x,b,\varphi,\mu)=\Psi(x,b,2\pi-\varphi,\mu), \quad(x,\varphi,\mu)\in[-b,b]\times[\pi,2\pi]\times[-1,1].
    \end{align*}
\end{example}

We use the same collocation setting as in Example \ref{ex3}, but with different random distribution parameters $r$. The corresponding parameter choices and relative $\ell^2$ errors of the scalar flux are reported in Table \ref{table:ex4}. For each case, the RaNN prediction, the MCX reference solution, and the absolute error (without sketching acceleration) are shown in Figures \ref{ex4-reflectionboundary-1region-1net}--\ref{ex4-reflectionboundary-3region-1net}.

\begin{table}[!htbp]
\centering
\begin{tabular}{|c|c||c|c|c|c|c|c|}
\hline
    \multicolumn{2}{|c||}{} & Case 1 & Case 2 & Case 3 \\\hline
    \multirow{3}{*}{RaNN} & $r$ & 4 & 2 & 3 \\\cline{2-5}
    & $e_{\ell^2}$ & 4.0388e-03 & 4.5391e-03 & 8.0339e-03 \\\cline{2-5}
    & time(s) & 685.16 & 686.21 & 664.10 \\ \hline
    \multirow{2}{*}{RaNN-S} & $e_{\ell^2}$ & 4.1324e-03 & 8.0447e-03 & 7.8753e-03 \\\cline{2-5}
    & time(s) & 347.07 & 355.65 & 341.70 \\ \hline
    \multirow{3}{*}{FV} & $(N_\varphi,N_\mu)$ & \multicolumn{3}{c|}{(16,16)} \\\cline{2-5}
     & $e_{\ell^2}$ & 5.8339e-03 & 7.8757e-03 & 1.5400e-02 \\\cline{2-5}
     & time(s) & 534.18 & 656.96 & 542.87 \\ \hline
\end{tabular}
\caption{Parameter settings and relative $\ell^2$ errors of the scalar flux for RaNN and FV in Example \ref{ex4}. }
\label{table:ex4}
\end{table}

\begin{figure}[!htbp]
    \centering
    \includegraphics[width=\textwidth]{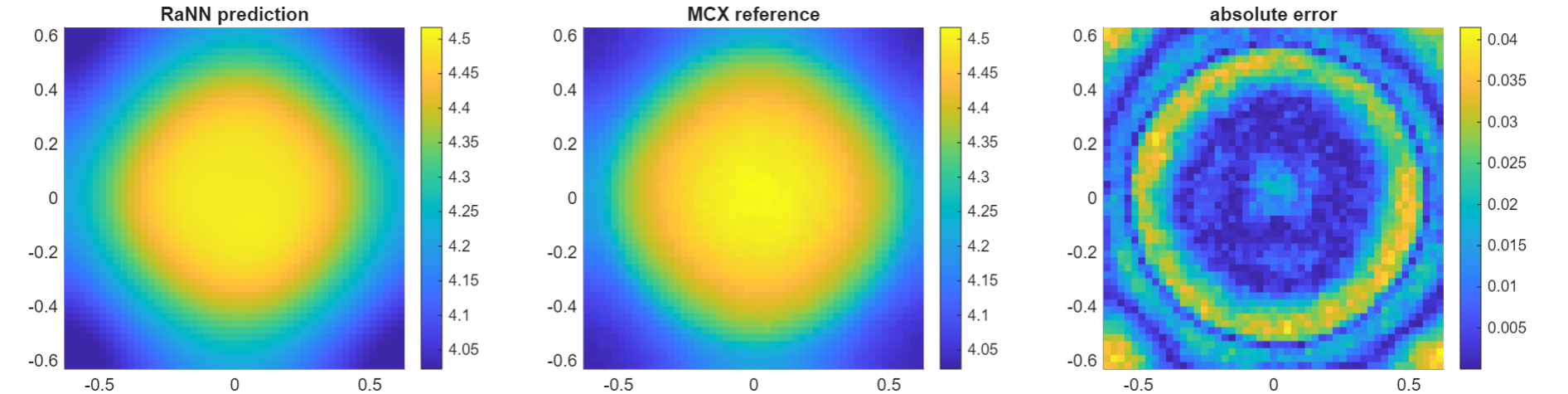}
    \caption{Scalar flux for Example \ref{ex4} (Case 1): RaNN prediction (left), MCX reference (middle), and absolute error (right).}
    \label{ex4-reflectionboundary-1region-1net}
\end{figure}

\begin{figure}[!htbp]
    \centering
    \includegraphics[width=\textwidth]{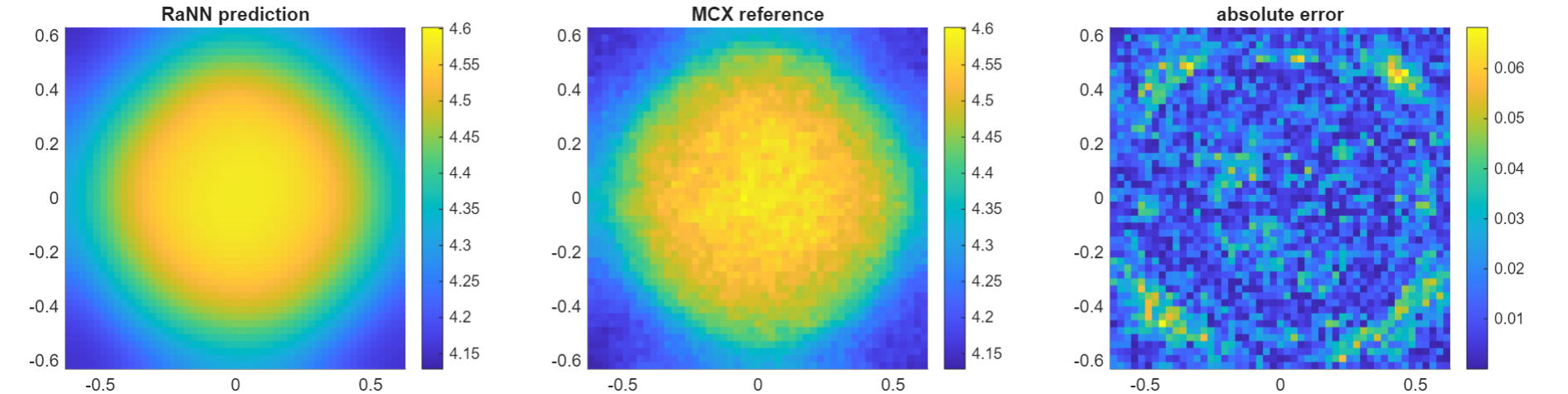}
    \caption{Scalar flux for Example \ref{ex4} (Case 2): RaNN prediction (left), MCX reference (middle), and absolute error (right).}
    \label{ex4-reflectionboundary-2region-1net}
\end{figure}

\begin{figure}[!htbp]
    \centering
    \includegraphics[width=\textwidth]{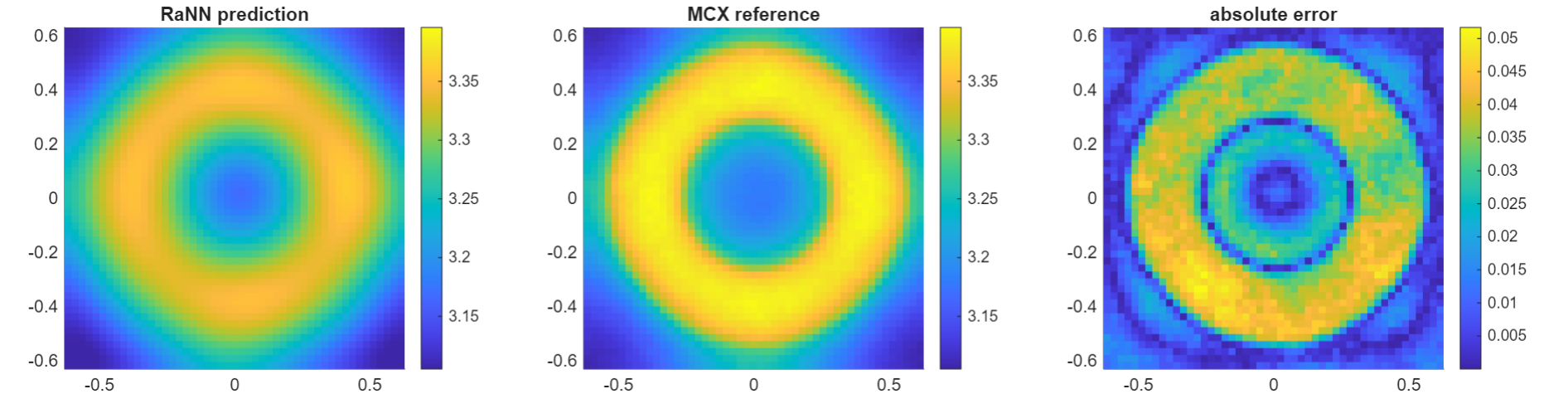}
    \caption{Scalar flux for Example \ref{ex4} (Case 3): RaNN prediction (left), MCX reference (middle), and absolute error (right).}
    \label{ex4-reflectionboundary-3region-1net}
\end{figure}

For comparison, we also solve the same transport problems using an $S_N$ discrete-ordinates formulation coupled with a finite-volume (FV) discretization in space. For the angular discretization, the $\varphi$-integral is approximated by a composite trapezoidal rule with $N_\varphi$ nodes, while the $\mu$-integral is evaluated using an $N_\mu$ point Gauss-Legendre quadrature. In space, we use a $50\times 50$ uniform grid. As in the previous tests, the MCX solution is taken as the reference. When $(N_\varphi,N_\mu)=(16,16)$, the relative $\ell^2$ errors are summarized in Table \ref{table:ex4}, and the corresponding absolute error fields are shown in Figure \ref{ex4-reflectionboundary-sn-16-16}. 

In particular, when $(N_\varphi,N_\mu)$ is increased to $(32,32)$, the resulting linear system exceeds the available memory on our platform. These results suggest that the RaNN formulation can remain competitive in regimes where high-resolution $S_N$ discretizations become memory intensive on the tested platform.

\begin{figure}[!htbp]
    \centering
    \includegraphics[width=\textwidth]{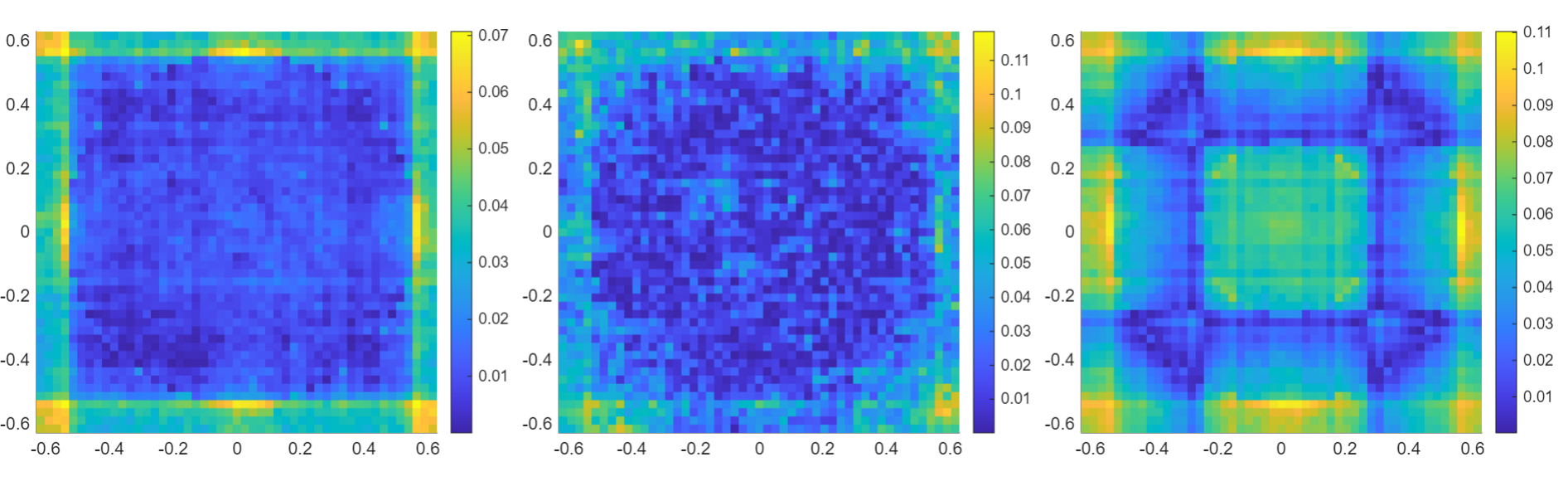}
    \caption{Absolute errors for FV as $(N_\varphi,N_\mu)=(16,16)$ in Example \ref{ex4}, left to right: Case 1 - Case 3.}
    \label{ex4-reflectionboundary-sn-16-16}
\end{figure}

In the reported test cases, the reflecting boundary problems lead to smaller relative errors than the corresponding vacuum cases. The sketching variant again provides a significant runtime reduction with only a modest degradation in accuracy, illustrating a practical trade-off between computational cost and accuracy for large least-squares systems. In comparison, the finite-volume discrete-ordinates (FV--$S_N$) solver attains similar error levels at moderate angular resolution, but memory requirements grow quickly with $(N_\varphi,N_\mu)$, limiting feasible angular refinement on the tested platform. Collectively, these observations emphasize that RaNN can deliver competitive accuracy without explicitly forming the large coupled systems typical of high-resolution $S_N$ discretizations, particularly in settings where angular refinement becomes memory intensive.

\begin{example}[7-Groups Pin-cell Problem] \label{ex5}
We consider a two-dimensional, seven-group pin-cell problem. The governing transport equations take the form
\begin{align*}
    \sqrt{1-\mu^2}\left(\cos\varphi\frac{\partial}{\partial x}+\sin\varphi\frac{\partial}{\partial y}\right)&\Psi_g(x,y,\varphi,\mu)+\Sigma_{t,g}(x,y)\Psi_g(x,y,\varphi,\mu)\\
    &=\frac{1}{4\pi}\sum_{g'=1}^7\Sigma_{s,g'\to g}(x,y)\int_{-1}^1 \int_0^{2\pi}\Psi_{g'}(x,y,\varphi',\mu')\mathrm{d}\varphi'\mathrm{d}\mu'+\frac{1}{4\pi}Q_g(x,y),
\end{align*}
for $g=1,\cdots,7$ and $(x,y,\varphi,\mu)\in(-b,b)^2\times(0,2\pi)\times(-1,1)$, where $b=0.63\,cm$. The scattering term couples different energy groups through the macroscopic cross sections $\Sigma_{s,g'\to g}$, so that the scattering source term for group $g$ depends on the angular fluxes in groups $g'$. Reflecting boundary conditions are imposed. The pin-cell consists of two spatial subdomains,
\begin{align*}
    &D_{\br}^1=\{(x,y):x^2+y^2<R^2,R=0.54\,cm\},\\
    &D_{\br}^2=\{(x,y):(-b,b)^2\setminus\bar{D}_{\br}^1\}.
\end{align*}
The macroscopic group constants are listed in Table \ref{table:ex5_groupconst1}--Table \ref{table:ex5_groupconst3}.

\begin{table}[!htbp]
\centering
\begin{tabular}{|c|c|c|c|c|c|c|c|}
\hline
$g$ & Group 1 & Group 2 & Group 3 & Group 4 & Group 5 & Group 6 & Group 7 \\\hline
$D_{\br}^1$ & 3.558980e-1 & 6.596100e-1 & 9.607760e-1 & 1.108734e-0 & 6.236020e-1 & 7.903360e-1 & 1.128812e-0 \\\hline 
$D_{\br}^2$ & 3.184120e-1 & 8.259400e-1 & 1.180620e-0 & 1.168700e-0 & 1.436000e-0 & 2.508900e-0 & 5.300760e-0 \\\hline 
\end{tabular}
\caption{Macroscopic total cross sections $\Sigma_{t,g}$ (cm$^{-1}$) for the two regions in Example \ref{ex5}. }
\label{table:ex5_groupconst1}
\end{table}

\begin{table}[!htbp]
\centering
\begin{tabular}{|c|c|c|c|c|c|c|c|}
\hline
\diagbox{$g'$}{$\Sigma_{s,g'\to g}$}{$g$} & to Group 1 & to Group 2 & to Group 3 & to Group 4 & to Group 5 & to Group 6 & to Group 7 \\\hline
Group 1 & 1.27537e-1 & 4.37800e-2 & 9.43740e-6 & 5.51630e-9 & 0 & 0 & 0 \\\hline 
Group 2 & 0 & 3.24456e-1 & 1.63140e-3 & 3.14270e-9 & 0 & 0 & 0 \\\hline 
Group 3 & 0 & 0 & 4.50940e-1 & 2.67920e-3 & 0 & 0 & 0 \\\hline 
Group 4 & 0 & 0 & 0 & 4.52565e-1 & 5.56640e-3 & 0 & 0 \\\hline 
Group 5 & 0 & 0 & 0 & 1.25250e-4 & 2.71401e-1 & 1.02550e-2 & 1.00210e-8 \\\hline 
Group 6 & 0 & 0 & 0 & 0 & 1.29680e-3 & 2.65802e-1 & 1.68090e-2 \\\hline 
Group 7 & 0 & 0 & 0 & 0 & 0 & 8.54580e-3 & 2.73080e-1 \\\hline 
\end{tabular}
\caption{Macroscopic scattering cross sections $\Sigma_{s,g'\to g}$ (cm$^{-1}$) in region $D_{\br}^1$ for Example \ref{ex5}. }
\label{table:ex5_groupconst2}
\end{table}

\begin{table}[!htbp]
\centering
\begin{tabular}{|c|c|c|c|c|c|c|c|}
\hline
\diagbox{$g'$}{$\Sigma_{s,g'\to g}$}{$g$} & to Group 1 & to Group 2 & to Group 3 & to Group 4 & to Group 5 & to Group 6 & to Group 7 \\\hline
Group 1 & 4.44777e-2 & 1.13400e-1 & 7.23470e-4 & 3.74990e-6 & 5.31840e-8 & 0 & 0 \\\hline 
Group 2 & 0 & 2.82334e-1 & 1.29940e-1 & 6.23400e-4 & 4.80020e-5 & 7.44860e-6 & 1.04550e-6 \\\hline 
Group 3 & 0 & 0 & 3.45256e-1 & 2.24570e-1 & 1.69990e-2 & 2.64430e-3 & 5.03440e-4 \\\hline 
Group 4 & 0 & 0 & 0 & 9.10284e-2 & 4.15510e-1 & 6.37320e-2 & 1.21390e-2 \\\hline 
Group 5 & 0 & 0 & 0 & 7.14370e-5 & 1.39138e-1 & 5.11820e-1 & 6.12290e-2 \\\hline 
Group 6 & 0 & 0 & 0 & 0 & 2.21570e-3 & 6.99913e-1 & 5.37320e-1 \\\hline 
Group 7 & 0 & 0 & 0 & 0 & 0 & 1.32440e-1 & 2.48070e-0 \\\hline 
\end{tabular}
\caption{Macroscopic scattering cross sections $\Sigma_{s,g'\to g}$ (cm$^{-1}$) in region $D_{\br}^2$ for Example \ref{ex5}. }
\label{table:ex5_groupconst3}
\end{table}
\end{example}

In this example, the sketching technique described in Section \ref{sec:sketching} is employed, and the same sketching matrix $S$ is used for all least-squares solves. Based on the sparsity patterns of the group-to-group scattering matrices in Tables \ref{table:ex5_groupconst2}--\ref{table:ex5_groupconst3}, the multigroup system exhibits a block structure: Groups 1--3 form a lower-triangular subsystem that can be solved sequentially, whereas Groups 4--7 remain mutually coupled and are therefore solved simultaneously through a block least-squares problem.

We set $N^I=31\times31\times31\times31$, $N^B=4\times30\times30\times30$, $m=6000$ and $r=2$. The relative $\ell^2$ errors for different groups and runtime are shown in Table \ref{table:ex5}. The numerical solutions, reference solutions, and absolute errors are presented in Figure \ref{ex5-reflectionboundary-2region-1net}.

Note that we set $m=6000$ due to computational constraints (using more neurons would lead to an out-of-memory error), though a larger number of neurons would yield better results. The reported errors increase with group index, indicating that accurately resolving the down-scattered, strongly coupled low-energy behavior requires greater representational capacity. This example suggests that further improvements will likely require larger feature budgets (or adaptive/localized bases), more efficient memory management, and enhanced multigroup coupling strategies to scale RaNN reliably to high-fidelity multigroup transport.

\begin{table}[!htbp]
\centering
\resizebox{\linewidth}{!}{ 
\begin{tabular}{|c|c|c|c|c|c|c|c||c|}
\hline
$g$ & Group 1 & Group 2 & Group 3 & Group 4 & Group 5 & Group 6 & Group 7 & time(s) \\\hline
$e_{\ell^2}$ & 5.6894e-02 & 3.1044e-02 & 4.5269e-02 & 9.8361e-02 & 1.4397e-01 & 1.9357e-01 & 2.4744e-01 & 3942.39 \\\hline 
\end{tabular}
}
\caption{Relative $\ell^2$ errors and runtime of RaNN-S method in Example \ref{ex5}. }
\label{table:ex5}
\end{table}

\begin{figure}[!htbp]
    \centering
    \includegraphics[width=\textwidth]{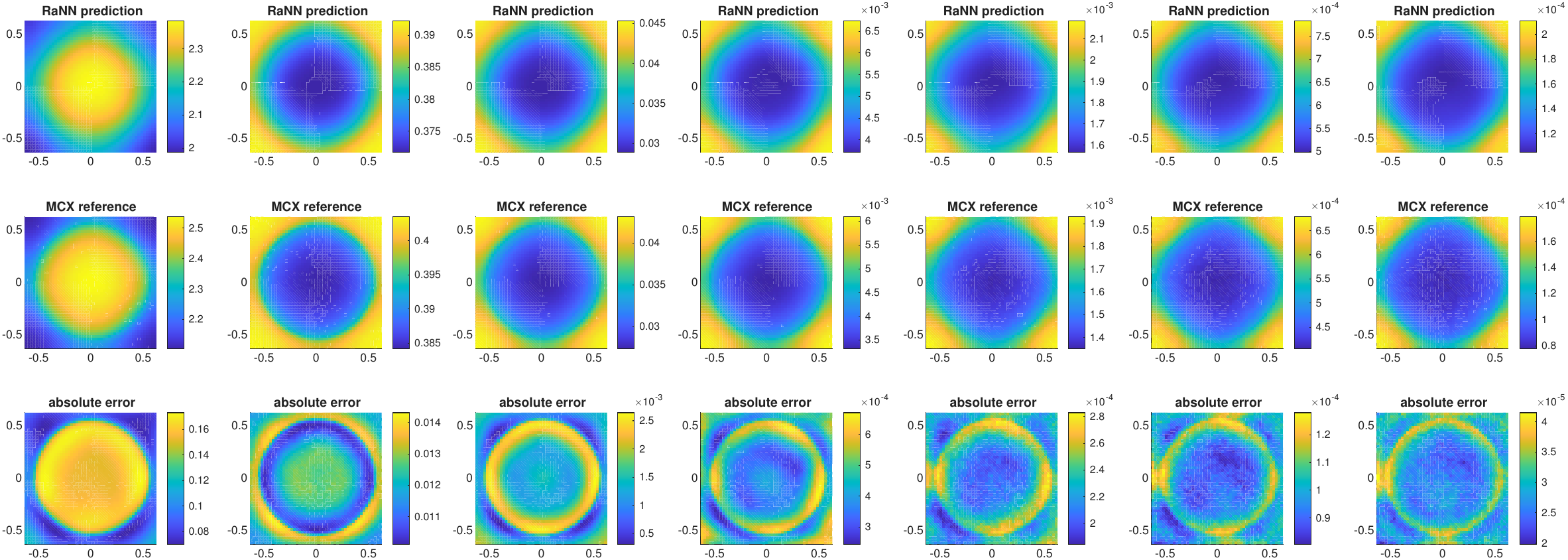}
        \caption{For each energy group (left to right: Groups 1-7), we show the RaNN scalar flux (top), the MCX reference (middle), and the absolute error (bottom).}
    \label{ex5-reflectionboundary-2region-1net}
\end{figure}

\section{Summary} \label{sec:summary}

This work presents randomized neural networks as a general solver for linear integro-differential equations and demonstrates the framework on the steady neutron transport equation as a representative high-dimensional nonlocal model. The central idea is to randomly fix the hidden-layer parameters and train only the linear output weights, thereby reducing neural training to a linear least-squares problem in the output coefficients. This formulation yields a global minimizer of the empirical least-squares objective in the output coefficients and significantly reduces training cost. As a result, the proposed approach provides a practical way to address two common challenges in nonlocal models: (i) dense coupling induced by integral operators in deterministic discretizations, and (ii) the nonconvex optimization and high computational expense often encountered in PINN-type methods.

We have detailed the RaNN formulation for the neutron transport case study, including exact evaluation of differential operators via analytical differentiation, incorporation of scattering integrals through numerical quadrature, and extension to multi-material configurations using local RaNNs coupled by interface conditions. Numerical benchmarks covering a one-dimensional slab, a two-dimensional infinite cylinder, two-dimensional slabs with vacuum and reflecting boundaries, and a challenging two-dimensional seven-group problem with sketching acceleration show that RaNNs achieve competitive accuracy while offering reduced computational cost relative to the selected classical and neural baselines in the reported settings. These results indicate that RaNNs provide a robust, mesh-free, and efficient alternative for high-fidelity simulation of linear transport models with nonlocal integral operators.

Future work will focus on extending the RaNN framework to more complex geometries and larger-scale problems, including fully three-dimensional domains, irregular geometries, and substantially increased numbers of energy groups. Addressing these challenges will require efficient preprocessing for domain decomposition, improved strategies for strongly coupled multigroup systems, and the integration of high-performance computing techniques, such as GPU acceleration, to accommodate growing computational demands.

\end{document}